\def\@settitle{\begin{center}
		\baselineskip14\p@\relax
		\bfseries
		\LARGE
		\@title
	\end{center}
}
\newtheorem{theorem}{Theorem}[section]
\newtheorem{lemma}[theorem]{Lemma}
\newtheorem{corollary}[theorem]{Corollary}
\theoremstyle{definition}
\newtheorem{definition}[theorem]{Definition}
\newtheorem*{remark}{Remark}
\renewcommand{\leq}{\leqslant}
\newcommand{\ot}{\otimes}
\newcommand{\x}{\otimes}
\DeclareMathOperator{\Ext}{Ext}
\DeclareMathOperator{\Hom}{Hom}
\title{$BV$-structure on Hochschild cohomology for exceptional local algebras of quaternion type. Case of the small parameter}
\author{Andrei V. Semenov}
\thanks{This work is supported by Young Russian Mathematics award and author is grateful to its jury and sponsors; is supported by ``Native Towns'', a social investment program of PJSC ``Gazprom Neft''. Also author is supported in part by The Euler International Mathematical Institute, grant number is 075-15-2019-16-20}
\address{Andrei V. Semenov:
Chebyshev Laboratory, 
St. Petersburg State University, 14th Line V.O., 29B, 
Saint Petersburg 199178 Russia}
\email{asemenov.spb.56@gmail.com}
\keywords{Hochshild cohomology, Lie algebra, homological algebra, BV-structure, Gerstenhaber bracket}
\begin{document}
\maketitle

\begin{abstract}  This is the second paper in the cycle of articles about $BV$-structure on Hochshild cohomology of exceptional algebras of quaternion type. We give $BV$-structure's full description in the case of quaternion algebras $R(k,0,d)$, defined by parameter $k = 2$ according to Erdmann classification.

\end{abstract}
\vspace{10mm}

\section{Introduction}
A Hochschild cohomology is a well-known mathematical structure (see \cite{9}) and it is a large amount of papers studying it: see \cite{2, 5, 7, 8, 11, 18}, including author's paper \cite{6}, where one could find full list of articles about Hochschild cohomology. Tradler was first who introduced and developed $BV$-algebra structure on the Hochschild cohomology $HH^*(R)$, where $R$ is a finite dimensional symmetric algebra (see \cite{16}). According to his works, it is possible to define Gerstenhaber bracket (and a Lie algebra structure) on $HH^*(R)$ (see \cite{4} for example). There are plenty of papers developing the notion of $BV$-structure: see articles of Menichi \cite{14, 15}, Yang \cite{19}, Tradler \cite{16} and Ivanov \cite{12}, as well as Volkov's paper about $BV$-structure of Frobenius algebras (see. \cite{17}). \par
There is one big issue for computations: $BV$-structure is described in terms of bar-resolution, which makes it almost impossible to compute such structure for concrete examples, because the dimension of members of resolution grows exponentially. In order to avoid this we are using the method of comparsion morphisms for resolutions.\par 
This article stands as a second paper in a cycle of articles developing $BV$-structure on Hochschild cohomology for algebras of quaternion type (according to Erdmann's classification in \cite{3}). We consider a family of algebras $\{R(k,0,d)\}_{d \in K}$ over an algebraically closed field of characteristic 2 in case of $k=2$. It should be mentioned that one crucial partial case of $R(2,0,0)$ was calculated in \cite{10}. So our main Theorem generalizes the result of \cite{10}. One should note that the case of $BV$-structure on Hochschild cohomology for algebras $R(k,0,d)$ for even parameter $k>2$ was studied in the first paper of this cycle: see \cite{20}.\par

\section{Main definitions}
\subsection{Hochschild (co)homology}
Consider an associative algebra $A$ over the field $K$ of characteristic 2. Define its universal enveloping algebra as $A^e= A \otimes A^{op}$. Also we consider the free bar-resolution of $A$
$$\CD A @<{\mu}<< A^{\otimes 2} @<{d_1}<<  A^{\otimes 3} @<{d_2}<< ... @<{d_n}<< A^{\otimes n+2} @<{d_{n+1}}<< A^{\otimes n+3} ...\endCD$$
with differentials defined by rule
$$d_n(a_0 \x \dots a_{n+1}) = \sum \limits_{i=0}^{n} (-1)^i a_0 \x \dots \x a_i a_{i+1} \x \dots  \x a_{n+1}.$$
One can construct the {\it normalized bar-resolution} by setting $\overline{Bar}(A)_n = A \otimes \overline{A}^{\x n} \x A$, where $\overline{A} = A/ \langle 1_A \rangle$, and the differentials are induced by that of the bar-resolution.
\begin{definition} The $n$-th Hochschild {\it cohomology} is a space $HH^n(A) = \Ext^n_{A^e} (A,A)$ for any natural $n \ge 0$.
\end{definition}

\begin{definition} The $n$-th Hochschild {\it homology} $HH_n(A)$ is an $n$-th homology of the complex $A \x_{A^e} Bar_{\bullet}(A) \simeq A^{\bullet+1}$, where map $\partial_{n+1} : A^{\x (n+1)} \longrightarrow A^{\x n}$ sends $a_0 \x \dots \x a_n$ to $\sum_{i=0}^{n-1} (-1)^i a_0 \x \dots \x a_{i} a_{i+1} \x \dots \x a_n + (-1)^{n} a_na_0 \x \dots \x a_{n-1}$ and the differential is induced by that map. 

\end{definition}

One can define a cup-product on the Hochschild cohomology. For any classes $a \in HH^n(A)$ and $b \in HH^m(A)$ we define its cup-product $a \smile b \in HH^{n+m}(A)$ as the class of cup-product of their representatives $a \in \Hom_k(A^{\x n}, A)$ and $b \in \Hom_k(A^{\x m}, A)$. So by linear extension 
$$\smile : HH^n(A) \times HH^m(A) \longrightarrow HH^{n+m}(A)$$
the Hochschild cohomology space $HH^{\bullet}(A) = \bigoplus \limits_{n \ge 0} HH^n(A)$ becomes a graded-commutative algebra. 

\subsection{Gerstenhaber bracket}
For any $f \in \Hom_k(A^{\x n}, A)$ and $g \in \Hom_k(A^{\x m}, A)$ define $f \circ_i g \in \Hom_k(A^{\x n+m-1}, A)$ with the following rules:
\begin{enumerate}
    \item if $n \ge 1$ and $m \ge 1$, then let $f \circ_i g (a_1 \x \dots \x a_{n+m-1})$ be the formula $f(a_1 \x \dots a_{i-1} \x g(a_i \x \dots a_{i+m-1}) \x \dots \x a_{n+m-1})$.
    \item if $n \ge 1$ and $m=0$, then let $f \circ_i g (a_1 \x \dots \x a_{n-1})$ be the formula $f(a_1 \x \dots a_{i-1} \x g \x a_i \x  \dots \x a_{n+m-1})$. 
    \item otherwise let $f \circ_i g$ be equal to zero.
\end{enumerate}
\begin{definition}
Define {\it Gerstenhaber bracket} of $f \in \Hom_k(A^{\x n}, A)$and $g \in \Hom_k(A^{\x m}, A)$ by the formula
\[[f,g] = f \circ g - (-1)^{(n-1)(m-1)} g \circ f, \]
where $a \circ b = \sum \limits_{i=1}^n (-1)^{(m-1)(i-1)} a \circ_i b$.
\end{definition}
Obviously we have $[f,g] \in \Hom_k(A^{\x n+m-1}, A)$. Now for any $a \in HH^n(A)$ and $b \in HH^m(A)$ one can define $[a,b] \in HH^{n+m-1}(A)$ as a class of Gerstenhaber bracket of representatives of $a$ and $b$. So there exists a correctly defined map
$$ [-,-]:HH^{*} (A) \times HH^{*} (A) \longrightarrow HH^{*} (A),$$
which defines a structure of a graded Lie algebra on the $HH^*(R)$. This map will also be called {\it Gerstenhaber bracket} and it is not hard to see that $(HH^{*} (A), \smile, [-,-])$ is a Gerstenhaber algebra (see. \cite{4}).
\subsection{$BV$-structure}
\begin{definition}
By the {\it Batalin-Vilkovisky algebra} (or $BV$-algebra for short) we call Gerstenhaber algebra $(A^{\bullet}, \smile, [-,-])$ together with an operator $\Delta^{\bullet}$ of degree $-1$, such that $\Delta \circ \Delta = 0$ and
$$[a,b] = - (-1)^{(|a|-1)|b|} (\Delta(a \smile b) - \Delta(a) \smile b - (-1)^{|a|} a \smile \Delta(b)$$
for all homogeneous $a, b \in A^{\bullet}$.
\end{definition}
For $a_0 \x \dots \x a_n \in A^{\x (n+1)}$ define $\mathfrak{B}(a_0 \x \dots \x a_n)$ by the formula
\[\mathfrak{B}(a_0 \x \dots \x a_n)= \sum_{i=0}^n (-1)^{in} 1 \x a_i \x \dots \x a_n \x a_0 \x \dots \x a_{i-1} +\] 
\[ +  \sum_{i=0}^n (-1)^{in} a_i \x 1 \x a_{i+1} \x \dots \x a_n \x a_0 \x \dots \x a_{i-1}. \]
It is easy to see that $\mathfrak{B}(a_0 \x \dots \x a_n) \in A^{\x (n+2)} \simeq A \x_{A^e} A^{\x (n+2)}$, so it can be lifted to the chain complex map such that $\mathfrak{B} \circ \mathfrak{B} = 0$. So this is a correctly defined map on Hochschild homology.
\begin{definition}
The map $\mathfrak{B}: HH_{\bullet}(A) \longrightarrow HH_{\bullet+1}(A)$ is said to be Connes' $\mathfrak{B}$-operator.
\end{definition}
\begin{definition}
Algebra $A$ is called symmetric algebra, if it is isomorphic (as $A^e$-module) to its dual $DA = \Hom_K(A,K)$. 
\end{definition}
For a symmetric algebra $A$ one can always find the non-degenerate symmetric bilinear form $\langle -,- \rangle : A \times A \longrightarrow K$, and, obviously, reversed statement holds: for any such form the algebra $A$ is symmetric. So in case of symmetric algebras Hochschild homology and colomology are dual:
$$\Hom_K(A \x_{A^e} Bar_{\bullet}(A),K) \simeq \Hom_{A^e} (Bar_{\bullet}(A), \Hom_K(A,K)) \simeq \Hom_{A^e}(Bar_{\bullet}(A), A),$$
and one can define $\Delta: HH^n(A) \longrightarrow HH^{n-1}(A)$ as a dual to Connes' $\mathfrak{B}$-operator. \par 
Hence Hochschild cohomology of a symmetric algebra $A$ is a $BV$-algebra (see \cite{16}), and Connes' $\mathfrak{B}$-operator on homoplogy corresponds to $\Delta$ on cohomology. 
\begin{theorem}[Theorem 1, see \cite{13}]
Defined above cup-product, Gerstenhaber bracket and operator $\Delta$ induces a structure of $BV$-algebra on  $HH^{*}(A)$. Moreover, for any $f \in \Hom_K(A^{\x n}, A)$ the element $\Delta(f) \in \Hom_K(A^{\x (n-1)}, A)$ defined properly by formula
$$\langle \Delta(f)(a_1 \x \dots \x a_{n-1}),a_n \rangle = \sum_{i=1}^n (-1)^{i(n-1)} \langle f(a_i \x \dots \x a_{n-1}\x a_n \x a_1 \x \dots \x a_{i-1}), 1 \rangle$$
for any $a_i \in A$.
\end{theorem}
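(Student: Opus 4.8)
The plan is to obtain $\Delta$ as the dual of Connes' operator $\mathfrak{B}$ under the duality between Hochschild homology and cohomology afforded by the symmetric form, and then to verify the two defining axioms of a $BV$-algebra: that $\Delta\circ\Delta=0$ and that the failure of $\Delta$ to be a derivation of the cup product is measured by the Gerstenhaber bracket. Since $A$ is symmetric we have a non-degenerate, symmetric, associative form $\langle -,-\rangle$ satisfying $\langle ab,c\rangle=\langle a,bc\rangle$, and this induces the identifications recalled in the excerpt. Under them a cochain $f\in\Hom_K(A^{\x n},A)$ pairs with a chain $a_0\x\dots\x a_n$ by $\langle a_0, f(a_1\x\dots\x a_n)\rangle$, and $\Delta$ is characterized by the adjunction $\langle c,\Delta(f)\rangle=\langle \mathfrak{B}(c),f\rangle$ for every chain $c$.

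First I would derive the explicit formula. Taking $c=a_n\x a_1\x\dots\x a_{n-1}$ and expanding $\mathfrak{B}(c)$ by its definition produces two families of cyclically rotated tensors, one beginning $1\x\dots$ and one of the shape $a_i\x 1\x\dots$. Pairing against $f$ and using that $f$ is a cochain on the \emph{normalized} bar resolution, hence vanishes as soon as any tensor slot equals $1$, annihilates the entire second family, since each of its terms feeds a $1$ into a slot of $f$. What survives is exactly the first family, and applying the symmetry $\langle 1,x\rangle=\langle x,1\rangle$ together with the invariance of the form yields, after relabelling the summation index, the stated expression for $\langle\Delta(f)(a_1\x\dots\x a_{n-1}),a_n\rangle$; the sign $(-1)^{i(n-1)}$ is inherited directly from the coefficients in $\mathfrak{B}$. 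The identity $\Delta\circ\Delta=0$ is then immediate from the adjunction together with the already-noted $\mathfrak{B}\circ\mathfrak{B}=0$: dualizing a vanishing composite gives a vanishing composite, and compatibility of the duality with the differentials shows $\Delta$ descends to cohomology.

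The main obstacle is the $BV$-identity, i.e.\ showing that for cocycles representing $a\in HH^n(A)$ and $b\in HH^m(A)$ one has
\[[a,b]=-(-1)^{(n-1)m}\bigl(\Delta(a\smile b)-\Delta(a)\smile b-(-1)^{n}a\smile\Delta(b)\bigr).\]
I would verify this at the cochain level using the explicit formula for $\Delta$ above together with the defining formulas for $\smile$ and for the operations $\circ_i$. Expanding $\Delta(a\smile b)$ produces a long alternating sum indexed by a cyclic rotation parameter; the terms whose rotation does not separate the $a$-block from the $b$-block reassemble, after using $\langle xy,z\rangle=\langle x,yz\rangle$, into $\Delta(a)\smile b$ and $a\smile\Delta(b)$, while the terms whose rotation splits one block across the cyclic seam recombine precisely into the insertion operations $a\circ b$ and $b\circ a$ constituting the Gerstenhaber bracket.

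The crux is the sign bookkeeping: one must check that the Koszul signs arising from the cyclic permutations in $\mathfrak{B}$, the degree shifts in $\smile$, and the convention $(-1)^{(m-1)(i-1)}$ in $\circ_i$ conspire to produce exactly the coefficient $-(-1)^{(n-1)m}$ and the internal signs of $[-,-]$. Because all identities are established on representatives and every operator involved is a chain map, passing to cohomology classes is automatic, which completes the verification that $(HH^{*}(A),\smile,[-,-],\Delta)$ is a $BV$-algebra.
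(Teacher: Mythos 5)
Before anything else, note that the paper contains no proof of this statement: it is imported from the literature (this is Tradler's theorem for symmetric algebras, see \cite{16}, stated here in the form used in \cite{10}), so there is no internal argument to compare yours against, and your attempt must stand on its own. Its first two steps do stand. Defining $\Delta$ by the adjunction $\langle c,\Delta(f)\rangle=\langle \mathfrak{B}(c),f\rangle$, your derivation of the displayed formula is correct: the terms $a_i\x 1\x\cdots$ of $\mathfrak{B}(c)$ die against normalized cochains, the terms $1\x a_i\x\cdots$ produce exactly the cyclic sum with the signs $(-1)^{i(n-1)}$ inherited from $\mathfrak{B}$, and $\Delta\circ\Delta=0$ together with the descent to cohomology follows from $\mathfrak{B}\circ\mathfrak{B}=0$ and the compatibility of the duality with the differentials.

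The genuine gap is in the BV identity, and it is not merely ``sign bookkeeping.'' The recombination mechanism you describe cannot work as stated, for a structural reason. Every term of $\Delta(a\smile b)$ has the shape $\pm\langle a(\mathrm{arc})\,b(\mathrm{arc}'),1\rangle=\pm\langle a(\mathrm{arc}),b(\mathrm{arc}')\rangle$, where the two arcs are complementary arcs of the cyclic word $(a_1,\dots,a_{n+m})$: the outputs of $a$ and $b$ are only ever \emph{multiplied against each other}. By contrast, every term of $\Delta(a)\smile b$, of $a\smile\Delta(b)$, and of $a\circ b$, $b\circ a$ contains the output of one cochain (possibly multiplied by the pairing variable $a_{n+m}$) sitting \emph{inside an argument slot} of the other. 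No partition of the cyclic rotations and no amount of invariance $\langle xy,z\rangle=\langle x,yz\rangle$ converts expressions of the first shape into expressions of the second shape. The bridge is the cocycle conditions on the representatives (the vanishing of their Hochschild coboundaries; in degree one this is exactly the derivation property), applied repeatedly to trade a product occurring inside an argument for multiplications outside, and the organization of the resulting correction terms is the actual content of the theorem --- it is why this is a theorem of Tradler rather than a routine verification. Your proposal never invokes the cocycle conditions at all, so as written the crucial step is asserted rather than proved; you should either carry out that computation in full (checking low-degree cases such as $n=m=1$ shows how the cocycle condition enters) or cite \cite{16} for this step, which is effectively what the paper does.
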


\begin{remark}
All constructions here can be defined and used in terms of the normalized bar-resolution.
\end{remark}

\section{Weak self-homotopy}
\subsection{Resolution}
Let $K$ be an algebraically closed field of characteristic 2 and fix $d \in K$. Consider an algebra $R(2, 0, d) = K \langle X,Y \rangle /I$, where $I = \langle X^2+YXY, Y^2+XYX + d(XY)^2, X(YX)^2, Y(XY)^2\rangle$ is an ideal in $K \langle X,Y \rangle$ (ans so one could easily check that $(XY)^2 + (YX)^2 \in I$). Let $B$ be the standard basis of the algebra $R=R(2, 0, d)$, so the set $B_1 = \{u \otimes v \mid u, v \in B\}$ is a basis for the enveloping algebra $\Lambda = R \otimes R^{op}$.\par
It should be mentioned that all algebras $R(2,0,d)$ are symmetric. This could be checked immediately by using such symmetric non-degenerate bilinear form:
$$\langle b_1, b_2\rangle = \begin{cases}
 1, & b_1 b_2 \in Soc(R) \\
 0, & \text{otherwise.}
\end{cases}$$
So in order to define graded Lie algebra structure on $HH^*(R)$ one only need to know how $\Delta$ (see definition 4) acts on the Hochschild cohomology. \par
Note that right multiplication by $\lambda \in \Lambda$ induces an endomorphism $\lambda^{*}$ of the left
$\Lambda$-module $\Lambda$; and we will denote it $\lambda$ as well. Sometimes we consider an endomorphism of the right $\Lambda$-module $\Lambda$, which is induced by left multiplication of $\lambda$. Let us denote such endomorphism as ${}^{*}\lambda$. \par
Now construct $4$-periodical resolution in the category of (left) $\Lambda$-modules
$$
\CD
P_0 @<{d_0}<< P_1 @<{d_1}<<  P_2 @<{d_2}<< P_3  @<{d_3}<< P_4 @<{d_4}<< \dots\\
\endCD$$
where $P_0=P_3=\Lambda$, $P_1 = P_2 = \Lambda^2$, and differentials defined by the formulae
$$d_0 = \begin{pmatrix}
x \otimes 1 + 1\otimes x & y \otimes 1 + 1 \otimes y
\end{pmatrix}, $$
$$d_1=\begin{pmatrix}
x \otimes 1 + 1\otimes x + y \otimes y, & 1 \x yx + xy \x 1 + d \x yxy + dxy \x y \\
1 \x xy + yx \x 1, & y \otimes 1 + 1\otimes y + x \otimes x + d x \x xy + dxyx \x 1
\end{pmatrix},$$ 
$$d_2 = \begin{pmatrix}
x \otimes 1 + 1\otimes x \\
y \otimes 1 + 1\otimes y + dy\otimes y + 1 \otimes dxyx+d^2y\otimes xyx\\
\end{pmatrix}, \quad d_3 =  \lambda^{*},$$  
where $ \lambda = \sum \limits_{0}^{2}(xy)^i \otimes (xy)^{2-i} + yx \otimes yx +\sum \limits_{0}^{1}y(xy)^i \otimes x(yx)^{1-i} +\sum \limits_{0}^{1}x(yx)^i \otimes y(xy)^{1-i} + dxyx \otimes xyx .$

Consider the multiplication map $\mu:\Lambda \longrightarrow R$, such that $\mu(a \otimes b) = ab$.
\begin{theorem}[Proposition 3.1, \cite{6}]
The complex $P_{\bullet}$ equipped with the map $\mu$ forms the minimal $\Lambda$-projective resolution of $R$.
\end{theorem}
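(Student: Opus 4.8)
The statement packages four claims about the augmented complex $\dots\to P_1\xrightarrow{d_0}P_0\xrightarrow{\mu}R\to0$: each term is $\Lambda$-projective, $\mu$ is surjective, the maps compose to a complex, the complex is exact, and it is minimal. The first two are immediate, since every $P_i$ is free of finite rank by construction and $\mu(a\ot1)=a$. My plan is to dispatch the ``complex'' and ``minimal'' conditions by bounded matrix computations and to reserve the real work for \emph{exactness}, which I would extract from an explicit weak self-homotopy.

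Because of the $4$-periodicity $P_{i+4}=P_i$, $d_{i+4}=d_i$, the complex condition amounts to the finitely many identities $\mu d_0=0$, $d_0d_1=0$, $d_1d_2=0$, $d_2d_3=0$ and $d_3d_0=0$; every other composite repeats one of these. Each is the product of the displayed matrices, computed in $\Lambda=R\ot R^{op}$ by reducing the resulting monomials modulo $I$ and invoking the consequence $(XY)^2+(YX)^2\in I$. The parameter $d$ intervenes only through these reductions, and it must be tracked with care, since the entries of $d_1$, $d_2$ and the element $\lambda$ carry $d$ (indeed $d^2$, in the last entry of $d_2$). For minimality I would observe, from the same display, that every matrix entry of each $d_i$ and every tensor factor of $\lambda$ is a combination of monomials of positive length in $X$ or $Y$ on at least one side, hence lies in $\operatorname{rad}\Lambda=\operatorname{rad}(R)\ot R+R\ot\operatorname{rad}(R)$. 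Thus $\Ima d_i\subseteq\operatorname{rad}(\Lambda)\,P_i$ for every $i$, which is exactly minimality.

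For exactness I would build a \emph{weak self-homotopy}, as the section title suggests: a family of $K$-linear (not $\Lambda$-linear) maps $t_{-1}\colon R\to P_0$ and $t_n\colon P_n\to P_{n+1}$ with $\mu t_{-1}=\mathrm{id}_R$, $d_0t_0+t_{-1}\mu=\mathrm{id}_{P_0}$ and $d_nt_n+t_{n-1}d_{n-1}=\mathrm{id}_{P_n}$ for $n\ge1$. Such a system contracts the augmented complex over $K$ and therefore proves it exact. I would take $t_{-1}(r)=r\ot1$, a $K$-linear section of $\mu$, and define each $t_n$ on the $K$-basis $B_1=\{u\ot v\mid u,v\in B\}$ of $\Lambda$ monomial by monomial, arranging that $d$ removes one tensor factor while $t$ reinstalls it; the bookkeeping is governed by the position of the ``gap'' as one runs through the basis words $1,X,Y,XY,YX,XYX,YXY,(XY)^2$ of $R$ and their images under left and right multiplication.

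The crux, and the only genuinely laborious part, is this construction: the $t_n$ must be specified on every basis element and the identities $d_nt_n+t_{n-1}d_{n-1}=\mathrm{id}$ checked case by case, with the added nuisance that the correcting terms in $t_n$ are $d$-dependent and must be tuned so as to cancel the $d$- and $d^2$-contributions coming from $d_1$ and $d_2$. Periodicity of $P_\bullet$ keeps this finite: the homotopy is pinned down by finitely much data, and only finitely many identities need independent verification, the rest following by $4$-periodicity. A softer, less computational route would invoke the general theory of self-injective algebras of quaternion type to deduce the periodicity $\Omega^4_\Lambda R\cong R$ and hence exactness from the low-degree part alone; but I prefer the explicit homotopy, because it is precisely the datum needed afterwards to construct comparison morphisms between $P_\bullet$ and the normalized bar resolution and, through the $\Delta$-formula recalled above, to compute the $BV$-structure.
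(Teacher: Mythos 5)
Your plan is mathematically sound, but be aware that the paper you are being compared against contains no proof of this statement at all: the theorem is imported verbatim from \cite{6} (Proposition 3.1), and the present article simply cites it. So the comparison is really between your self-contained route and an outsourced one. Your decomposition is correct: projectivity and surjectivity of $\mu$ are free; the complex condition reduces, by $4$-periodicity of the differentials, to the five composites you list; minimality follows because every matrix entry of every $d_i$ (and every tensor factor of $\lambda$) lies in $\operatorname{rad}\Lambda = \operatorname{rad}(R)\otimes R + R\otimes\operatorname{rad}(R)$, which is a two-sided ideal, so the image of each $d_i$ lands in $\operatorname{rad}(\Lambda)P_i$; and a $K$-linear contracting homotopy does force all homology of the augmented complex to vanish. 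Moreover, the laborious ingredient you defer is precisely what this paper constructs anyway for a different purpose: Section 3 writes down $t_{-1},t_0,t_1,t_2,t_3$ explicitly on the standard basis, extends them $4$-periodically, and the theorem closing that section asserts exactly the homotopy identities you require (the paper uses them to build the comparison morphisms $\Phi$, $\Psi$, not to prove the resolution statement). What each route buys: the citation keeps this article focused on the $BV$-computations and leans on \cite{6}, where the resolution and its $d$-dependent correction terms were originally derived; your route is self-contained and has the genuine advantage that the homotopy it forces you to construct is reusable downstream for $\Psi$ and the $\Delta$-formula. The one caveat is that, as written, your proposal is a strategy rather than a proof: the explicit formulas for the $t_n$ and their case-by-case verification \emph{are} the mathematical content of the exactness claim, and until they are either written out (as in Section 3 of this paper) or cited, nothing has actually been checked.
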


We can represent the resolution $P_{\bullet}$ using the path algebra of $R$. One could define modules $KQ_1 = \langle x,y\rangle$ and $KQ_1^* = \langle r_x,r_y\rangle$, where $r_x = x^2+yxy$ and $r_y = y^2 + xyx+d(xy)^2$. It is easy to see that
$$R \otimes KQ_1 \otimes R = R \otimes \langle x \rangle \otimes R \oplus R \otimes \langle y \rangle \otimes R \simeq R \otimes R^{op} \oplus R \otimes R^{op} = \Lambda \oplus \Lambda,$$ 
so one could obtain the resolution $\{ P_n \}_{n=0}^{+\infty}$ of bimodules
$$
\CD
R @<{\mu}<< R \otimes R @<{d_0}<< R\otimes KQ_1 \otimes R @<{d_1}<< R\otimes KQ_1^* \otimes R  @<{d_2}<< R\otimes R @<{d_3}<< \dots
\endCD,$$
where $P_{n+4} = P_n$ for $n \in \mathbb{N}$. The differentials are defined by the formulae: 
\begin{itemize}
    \item $d_0(1 \otimes x \otimes 1) = x \otimes 1 + 1 \otimes x$, $d_0(1 \otimes y \otimes 1) = y \otimes 1 + 1 \otimes y$;
    \item $d_1(1 \otimes r_x \otimes 1) = 1\otimes x \otimes x + x \otimes x \otimes 1 +  y \otimes x \otimes y + 1 \x y \x xy + yx \x y \x 1$, \\
    $d_1(1 \otimes r_y \otimes 1) = 1\otimes y \otimes y + y \otimes y \otimes 1 +x \otimes y \otimes x + dx \x y \x xy + dxyx \x y \x 1 + 1 \x x \x yx + xy \x x \x 1 + d xy \x x \x y + d \x x \x yxy$;
    \item $d_2(1\otimes 1) = x \otimes r_x \otimes 1 + 1\otimes r_x \otimes x+ y\otimes r_y \otimes 1 + 1\otimes r_y \otimes y + d y\otimes r_y \otimes y + d \otimes r_y \otimes xyx + d^2 y \otimes r_y \otimes xyx$;
    \item $d_3 = \rho \mu$, where $\rho (1) = \sum \limits_{b \in B} b^*\otimes b + d xyx \otimes xyx$.
\end{itemize}
\subsection{Construction}
\begin{definition}
For the complex 
$$\CD
0 @<{}<< N @<{d_0}<< Q_0 @<{d_1}<< Q_1  @<{d_2}<< Q_2 \dots
\endCD$$
define {\it the weak self-homotopy} to be a collection of $K$-homomorphisms $t_{n} : Q_n \longrightarrow Q_{n+1} $ together with $t_{0} : N \longrightarrow Q_0$ such that $t_{n}d_n + d_{n+1}t_{n+1} = id_{Q_n}$ for all $n \geqslant 0$ and $d_0t_{0} = id_N$.
\end{definition}

We need to construct a weak self-homotopy $\{t_i :P_i \longrightarrow R_{i+1} \}_{i \geqslant -1}$ (here $P_{-1} = R)$ for such projective resolution, as in \cite{1}. 
Consider the bimodule derivation $C : KQ \longrightarrow KQ \otimes KQ_1 \otimes KQ$ by sending the path $\alpha_1...\alpha_n$ to $\sum \limits_{i=1}^n \alpha_1 ... \alpha_{i-1} \otimes \alpha_i \otimes \alpha_{i+1} ... \alpha_n$ and consider the induced map $C: R \longrightarrow R \otimes KQ_1 \otimes R$. So one could define $t_{-1} (1) = 1\otimes 1$ and $t_0 (b \otimes 1) = C(b)$ for $b \in B$. Now construct $t_1 : P_1 \longrightarrow P_2$ by the following rules: for $b\in B$ let
$$t_1(b \otimes x \ot 1) = $$
$$= \begin{cases} 
0, & bx \in B\setminus\{yxy\}\\
1 \ot r_x \ot 1, & b=x\\
1 \x r_x \x x^2 + x \x r_x \x x + x^2 \x r_x \x 1 +  yx \x r_y \x xy, & b = (xy)^2\\
T\Big(y \x r_x \x 1 + xy \x r_x \x y + 1 \x r_y \x xy +dx \x r_x \x xy  + dyxy \x r_x \x y \Big), & b = Tyx\\
y \x r_y \x 1+1 \x r_y \x y + dy \x r_y \x y + d\x r_y \x xyx+ d^2 y\x r_y \x xyx, & b = yxy,
\end{cases}
$$

$$t_1(b \otimes y \ot 1) = $$
$$ = \begin{cases} 
0, & by \in B \\

1 \ot r_y \ot 1, & b=y\\
T \Big(x \x r_y \x 1 + yx \x r_y \x x+ 1 \x r_x \x yx + d \x r_x \x yxy + \\
+ dyx \x r_y \x xy + dy \x r_x \x (xy)^2 \Big), & b = Txy.\\
\end{cases}
$$

In order to define $t_2 : P_2 \longrightarrow P_3$ use the following rules:
\begin{itemize}
    \item $t_2(x \x r_x \x 1) = 1 \x 1$,
    \item $t_2(y \x r_x \x 1) =  d yxy\x y^2 + d yx \x (xy)^2$,
    \item $t_2(xy \x r_x \x 1) = dyxy \x y + dyx \x y^2$,
    \item $t_2(yx \x r_x \x 1) = y\x 1 + d xyx \x 1 + dxy \x x + d x\x yx$,
    \item $t_2(yxy \x r_x \x 1) = 1 \x x$,
    \item $t_2 (xyx \x r_x \x 1) = xy \x 1 + x\x y + dyxy \x yx$,
    \item $t_2((xy)^2 \x r_x \x 1) = 1 \x yxy + yx \x y + y \x xy + yxy \x 1 +dxyx \x xy $,
\end{itemize}
and
\begin{itemize}
    \item $t_2(b \x r_y \x 1) = 0$ for $b \in \{x, y, xyx\}$,
    \item $t_2(xy \x r_y \x 1) = x\x 1 + dx \x y$,
    \item $t_2(yx \x r_y \x 1) =  dy^2 \x yxy + d(xy)^2 \x xy$,
    \item $t_2(yxy \x r_y \x 1) = yx \x 1 + y \x x + dyx \x y+ d y \x xy + d^2 xyx \x xy + d xyx \x x + dxy \x yxy$,
    \item $t_2((xy)^2 \x r_y \x 1) = xy \x x + x \x yx + xyx \x 1 + dx \x yxy + d xyx \x y+ dxy \x xy$.
\end{itemize}
Finally for $t_3 : R \x R \longrightarrow R \x R$ put $t_3((xy)^2 \x 1) = 1 \x 1$ and $t_3(b \x 1)=0$ otherwise. Now let $t_{n+4} = t_n$ for all $n \ge 4$.\par 
\begin{theorem}
The above-defined family of maps $\{t_i : P_{i} \longrightarrow P_{i+1}\}_{n \ge 0}$ together with $t_{-1}:R \longrightarrow P_0$ forms the weak self-homotopy for such resolution $P_{\bullet}$.
\end{theorem}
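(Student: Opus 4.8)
The plan is to verify, directly on a convenient generating set, the defining relations of a weak self-homotopy: $\mu t_{-1} = \mathrm{id}_R$ together with $t_{n-1}d_{n-1} + d_n t_n = \mathrm{id}_{P_n}$ for all $n \geq 0$, where I write $d_{-1} := \mu$. The first reduction is that each $t_n$ is specified only on elements $b \x v \x 1$ with $b \in B$ and $v$ a basis vector of the middle factor ($v = 1$ for $n = -1,0,3$; $v \in \{x,y\}$ for $n = 1$; $v \in \{r_x,r_y\}$ for $n = 2$), and is then extended to a homomorphism of right $R$-modules by acting on the rightmost tensor factor. Since $\mu$ and all the $d_n$ are bimodule maps and hence right $R$-linear, both sides of every homotopy relation are right $R$-linear. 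As the elements $\{b \x v \x 1\}$ form a free right $R$-basis of $P_n$, it suffices to check each relation on these generators.

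The second reduction uses $4$-periodicity. Because $P_{n+4} = P_n$, $d_{n+4} = d_n$ and $t_{n+4} = t_n$, the relation at level $n+4$ coincides with the one at level $n$ for every $n \geq 1$, so only levels $-1,0,1,2,3$ and the single periodic ``wrap'' at level $4$ survive. The wrap reads $t_3 d_3 + d_0 t_0 = \mathrm{id}_{P_0}$ and is not independent: using $d_3 = \rho\mu$ with $\rho(1) = \sum_{b \in B} b^* \x b + dxyx \x xyx$, together with $t_3((xy)^2 \x 1) = 1 \x 1$ and $t_3(b \x 1) = 0$ otherwise, one finds $t_3\rho = t_{-1}$ as maps $R \to P_0$ (the only surviving summand is the one with $b^* = (xy)^2$, i.e. $b = 1$), whence $t_3 d_3 = t_{-1}\mu$ and the level-$4$ relation becomes the level-$0$ relation. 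Level $-1$ is immediate, since $\mu t_{-1}(1) = \mu(1 \x 1) = 1$.

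Level $0$ is a general identity about the universal derivation $C$. On a path $b = \alpha_1 \cdots \alpha_n$ the map $d_0 C$ telescopes: writing $f_i = \alpha_1\cdots\alpha_i \x \alpha_{i+1}\cdots\alpha_n$, the $i$-th summand of $d_0 C(b)$ is $f_i + f_{i-1}$, so the total is $f_n + f_0 = b \x 1 + 1 \x b$ in characteristic $2$; since $t_{-1}\mu(b \x 1) = 1 \x b$, this yields exactly $t_{-1}\mu(b\x 1) + d_0 t_0(b \x 1) = b \x 1$. The genuine content is therefore at levels $1$, $2$ and $3$, where I would proceed uniformly: for each generator $b \x v \x 1$, compute $d_{n-1}(b \x v \x 1)$ from the bimodule formulas for $d_0,d_1,d_2$, apply the right-linear map $t_{n-1}$, compute $d_n t_n(b \x v \x 1)$ from the explicit case-by-case definition of $t_n$, and reduce every resulting word to the standard basis $B$ via the relations $x^2 = yxy$, $y^2 = xyx + d(xy)^2$, $x(yx)^2 = 0$, $y(xy)^2 = 0$ and the consequence $(xy)^2 = (yx)^2$, collecting all $d$-weighted corrections.

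The main obstacle is the bookkeeping in levels $1$ and $2$. The maps $t_1$ and $t_2$ are defined by long case distinctions, according to whether $bx$ or $by$ survives in $B$, falls into the socle, or vanishes, and with genuinely different formulas for $b = (xy)^2$, $b = Tyx$, $b = Txy$ and $b = yxy$, while $d_1$ and $d_2$ carry the parameter $d$. Arranging the case analysis so that in every single case the two contributions $t_{n-1}d_{n-1}$ and $d_n t_n$ combine, after normalization modulo $I$, to return precisely the generator $b \x v \x 1$, with all higher and all $d$-weighted terms cancelling, is the delicate and lengthy part; once the normal form of each word is fixed, the individual computations are mechanical.
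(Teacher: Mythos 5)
Your proposal is correct and takes essentially the same route as the paper, whose entire proof is the one-line assertion that the identities $t_{n-1}d_{n-1}+d_n t_n = \mathrm{id}$ are verified directly from the definitions of the $t_n$ for small $n$ and follow by $4$-periodicity for $n \ge 5$. If anything, your plan is more explicit than the paper's: the reduction to a free right $R$-basis, the telescoping identity $d_0C(b) = b \otimes 1 + 1 \otimes b$ at level $0$, and the verification $t_3\rho = t_{-1}$ that handles the periodic wrap at level $4$ are all left implicit there.
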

\begin{proof}
For any $n \in \mathbb{N}$ it remains to verify a commutativity of required diagrams, which is straight-up obvious from the definitions of $t_n$ for $n \leq 4$ and from the periodicity for $n \ge 5$.
\end{proof}

\section{Comparsion morphisms} 
Consider the normalized bar-resolution $ \overline{{Bar}}_{\bullet} (R) = R \otimes  \overline{{R}}^{\x \bullet } \otimes R$, where $ \overline{R} = R / (k\cdot 1_R)$. We now need to construct comparsion morphisms between $P_{\bullet}$ and $\overline{Bar}_{\bullet}(R)$:
$$\Phi : P_{\bullet} \longrightarrow  \overline{{Bar}}_{\bullet}(R) \text{ and } \Psi :  \overline{{Bar}}_{\bullet}(R) \longrightarrow  P_{\bullet}.$$
Note that there exists a weak self-homotopy $s_n(a_0 \x ... \x a_n \x 1) = 1 \x a_0 \x ... \x a_n \x 1$ of $ \overline{{Bar}}_{\bullet} (R)$, so put $\Phi_n = s_{n-1} \Phi_{n-1} d^P_{n-1}$ and $\Phi_0 = id_{R \x R}$.
\begin{lemma}
If $\Psi :  \overline{{Bar}}_{\bullet}(R) \longrightarrow  P_{\bullet}$ is the chain map constructed using $t_{\bullet}$, then for any $n \in \mathbb{N}$ and any $a_i \in R$ following formula holds
$$\Psi_n (1 \x a_1 \x ... \x a_n \x 1) = t_{n-1} (a_1 \Psi_{n-1} (1 \x a_2 \x ... \x a_n \x 1)).$$
\end{lemma}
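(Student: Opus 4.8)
The plan is to argue by induction on $n$, using that $\Psi$ is the comparison morphism obtained from the weak self-homotopy $t_\bullet$ of $P_\bullet$ by the mirror image of the recursion defining $\Phi$; explicitly $\Psi_0=\mathrm{id}_{R\otimes R}$ and $\Psi_n=t_{n-1}\Psi_{n-1}d_n$, where $d_n\colon\overline{Bar}_n(R)\to\overline{Bar}_{n-1}(R)$ is the normalized bar differential. Two structural facts will be used throughout: $\Psi_{n-1}$ is $\Lambda$-linear (being a chain map of $\Lambda$-modules), and each $t_i$ is right $R$-linear, since it is defined on the right-module generators $b\otimes(\mathrm{gen})\otimes 1$ and extended by right $R$-linearity; in particular both commute with right multiplication on the outer right tensor factor. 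For the base case $n=1$, over a field of characteristic $2$ one has $d_1(1\otimes a_1\otimes 1)=a_1\otimes 1+1\otimes a_1$, whence $\Psi_1(1\otimes a_1\otimes 1)=t_0(a_1\otimes 1)+t_0(1\otimes a_1)$; the second summand equals $t_0(1\otimes 1)(1\otimes a_1)=C(1)(1\otimes a_1)=0$ because the derivation $C$ kills the empty path, so $\Psi_1(1\otimes a_1\otimes 1)=t_0(a_1\otimes 1)=t_0(a_1\Psi_0(1\otimes 1))$, as claimed.

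For the inductive step I would expand, in characteristic $2$,
\[d_n(1\otimes a_1\otimes\dots\otimes a_n\otimes 1)=\sum_{i=0}^{n}1\otimes a_1\otimes\dots\otimes a_ia_{i+1}\otimes\dots\otimes a_n\otimes 1,\]
with $a_0=a_{n+1}=1$. The $i=0$ summand is $c:=a_1\otimes a_2\otimes\dots\otimes a_n\otimes 1$, while a termwise comparison identifies the remaining summands with $s_{n-2}(d_{n-1}(c))$, i.e. they lie in the image of the contracting homotopy $s_\bullet$ of the normalized bar resolution (this is just the homotopy identity $d_ns_{n-1}=\mathrm{id}+s_{n-2}d_{n-1}$ evaluated at $c$, using $s_{n-1}(c)=1\otimes a_1\otimes\dots\otimes a_n\otimes 1$). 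Writing $c=(a_1\otimes 1)\cdot(1\otimes a_2\otimes\dots\otimes a_n\otimes 1)$ and using $\Lambda$-linearity, $\Psi_{n-1}(c)=a_1\Psi_{n-1}(1\otimes a_2\otimes\dots\otimes a_n\otimes 1)$, so the $i=0$ summand already yields $t_{n-1}(a_1\Psi_{n-1}(1\otimes a_2\otimes\dots\otimes a_n\otimes 1))$, the right-hand side of the Lemma. It thus remains to show that $t_{n-1}\Psi_{n-1}$ annihilates $s_{n-2}(d_{n-1}(c))$.

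To establish this vanishing I would test against an arbitrary tensor $\kappa=g_0\otimes g_1\otimes\dots\otimes g_{n-2}\otimes g_{n-1}$ of $\overline{Bar}_{n-2}(R)$, so that $s_{n-2}(\kappa)=1\otimes g_0\otimes\dots\otimes g_{n-2}\otimes g_{n-1}$. Factoring off the outer factor and invoking the inductive hypothesis, $\Psi_{n-1}(s_{n-2}(\kappa))=t_{n-2}(w)(1\otimes g_{n-1})$ with $w=g_0\Psi_{n-2}(1\otimes g_1\otimes\dots\otimes g_{n-2}\otimes 1)$. Applying $t_{n-1}$ and pulling the right multiplication through by right $R$-linearity gives $t_{n-1}(t_{n-2}(w))(1\otimes g_{n-1})=(t_{n-1}t_{n-2})(w)(1\otimes g_{n-1})$. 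Hence the non-leading terms all die as soon as one knows the normalization identity $t_{n-1}t_{n-2}=0$; summing over the tensors of $d_{n-1}(c)$ then completes the induction.

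The main obstacle is precisely this normalization $t_nt_{n-1}=0$: the reduction above shows that the non-leading bar terms cannot be disposed of in any other way, so it is the one genuinely nontrivial input. By the $4$-periodicity $t_{n+4}=t_n$ it suffices to check the finitely many compositions $t_0t_{-1},t_1t_0,t_2t_1,t_3t_2,t_4t_3$, and each reduces to a finite verification on the right-module generators via the explicit formulae for $t_\bullet$; for example $t_0t_{-1}(1)=C(1)=0$ and $t_1t_0(xy\otimes 1)=t_1(1\otimes x\otimes y)+t_1(x\otimes y\otimes 1)=0$. I expect this bookkeeping to be entirely routine but the most labour-intensive part of the argument, and it is the only place where the defining relations of $R(2,0,d)$ really intervene.
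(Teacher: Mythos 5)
Your inductive scaffolding is fine (the base case, the bar homotopy identity $d_ns_{n-1}+s_{n-2}d_{n-1}=\mathrm{id}$, the $\Lambda$-linearity of $\Psi_{n-1}$, the right $R$-linearity of the $t_i$), but you have made the whole argument rest on the normalization identity $t_nt_{n-1}=0$, which you leave as ``routine bookkeeping'' --- and which is in fact \emph{false} for the homotopy $t_\bullet$ of this paper whenever $d\neq 0$. Two explicit failures: since $y^2=xyx+d(xy)^2$ and $t_3((xy)^2\otimes 1)=1\otimes 1$,
\[ t_3t_2(yx\otimes r_y\otimes 1)=t_3\big(dy^2\otimes yxy+d(xy)^2\otimes xy\big)=d^2\otimes yxy+d\otimes xy\neq 0;\]
and since $x^2=yxy$ and $y(xy)^2=0$,
\[ t_2t_1\big((xy)^2\otimes x\otimes 1\big)=t_2\big(1\otimes r_x\otimes x^2+x\otimes r_x\otimes x+x^2\otimes r_x\otimes 1+yx\otimes r_y\otimes xy\big)=d(xy)^2\otimes(xy)^2\neq 0,\]
because $t_2(x\otimes r_x\otimes 1)x$ and $t_2(yxy\otimes r_x\otimes 1)$ both equal $1\otimes x$ and cancel, leaving only the $r_y$-term. (These compositions do vanish when $d=0$, i.e.\ in the group-algebra case treated in \cite{10}, which is what makes the identity look plausible.) So the key input of your proof is simply unavailable here. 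Your parenthetical claim that the non-leading bar terms ``cannot be disposed of in any other way'' is also wrong: the discrepancy is the specific finite sum $\sum_j t_{n-1}\big(t_{n-2}(w_j)g_j\big)$ indexed by the terms of $d_{n-1}(c)$, and its summands can cancel one another even when the individual compositions are nonzero --- for instance on $1\otimes xy\otimes xy\otimes x\otimes 1$ the two nonvanishing contributions are both $d(xy)^2\otimes(xy)^2$ and kill each other in characteristic $2$.

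The source of the trouble is your reading of ``the chain map constructed using $t_\bullet$'': you took it to be the generic comparison lift $\Psi_n=t_{n-1}\Psi_{n-1}d_n^{Bar}$, the mirror of the recursion defining $\Phi$. The paper means the map of Lemma 2.5 of \cite{10}, which is \emph{defined} by exactly the recursion displayed in the statement of the present Lemma, $\Psi_n(1\otimes a_1\otimes\dots\otimes a_n\otimes 1)=t_{n-1}\big(a_1\Psi_{n-1}(1\otimes a_2\otimes\dots\otimes a_n\otimes 1)\big)$, extended $\Lambda$-linearly; the content of the cited lemma is that this recursion produces a chain map lifting $\mathrm{id}_R$, and its proof is an induction using only the weak self-homotopy identities and the right $R$-linearity of the $t_i$ --- no normalization $t_nt_{n-1}=0$ is ever needed, which is why it works for every $d$ and why the paper can dispose of the Lemma by a one-line citation. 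Under your interpretation you are attempting a strictly stronger statement, namely that the ``$t\Psi d$'' lift coincides with this formula; your method cannot establish it, and whether it even holds for $d\neq 0$ would require a cancellation analysis that you have not supplied.
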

\begin{proof}
The proof is an immediate consequence of Lemma 2.5 in \cite{10}.
\end{proof}

In order to define $BV$-structure on Hochschild cohomology one need to compute $\Delta : HH^n(R) \longrightarrow HH^{n-1}(R)$. By the Poisson rule we have
$$[a \smile b, c] = [a,c] \smile b + (-1)^{|a| (|c|-1)}(a \smile [b,c]),$$
and because char$K = 2$ it is easy to see that
$$\Delta(abc) = \Delta(ab)c + \Delta(ac)b + \Delta(bc)a + \Delta(a)bc +\Delta(b)ac + \Delta(c)ab.$$
So one need to compute $\Delta$ only on all generating elements and all cup-products of those elements. Furthermore, for any $\alpha \in HH^n(R)$ there exists a cocycle $f \in \Hom (P_n,R)$ such that following equality holds: 
$$\Delta(\alpha) = \Delta(f \Psi_n) \Phi_{n-1}.$$
Hence we have
$$\Delta(\alpha) (a_1 \x ... \x a_{n-1}) = \sum_{b \in B\setminus \{1\}} \sum \limits_{i=1}^n \langle (-1)^{i(n-1)} \alpha(a_i \x ... \x a_{n-1} \x b \x a_1 \x ...\x a_{i-1}), 1\rangle b^*,$$
where $\langle b,c \rangle$ is a defined-above bilinear form.

\section{$BV$-structure}

Let $K$ be an arbitrary algebraically closed field $K$ of characteristic two. Consider the set
$$\mathcal{X} = \{p_1, p_2,p_3,p_4,q_1,q_2,w_1,w_2,w_3,e\},$$ 
where the degrees of these elements are listed here:
$$
 |p_1| = |p_2| = |p_3| = |p_4| = 0, \ |q_1| = |q_2| = 1, \ |w_1| = |w_2| =|w_3|=2, \ |e|=4.
$$
Consider the ideal $\mathcal{I}$ in $K[\mathcal{X}]$, generated by such equalities:
\begin{itemize}
    \item degree 0: $p_ip_j$ for all $i,j \in \{1, 2, 3, 4\}$;
    \item degree 1: $p_3q_1+p_2q_2$, $p_1q_1+dp_3q_1 + p_3q_2$, $p_1q_2+p_2q_1$;
    \item degree 2: $p_2w_1$, $p_4w_1$, $p_3w_2$, $p_4w_2$, $p_4w_3$, $p_1w_1 + p_2w_2$, $p_1w_1 + p_3w_3$, $p_1w_1 + p_4q_1^2$, $p_3w_1+p_1w_2$, $p_3w_1 + p_2w_3$, $p_3w_1+p_4q_2^2$, $q_1q_2$, $p_1w_3+dp_2w_2$;
    \item degree 3: $q_1w_1+q_2w_2$, $q_1^3+q_2^3$, $q_1w_1 + q_2 w_1 + q_1w_3 + p_1q_1w_1$, $q_1w_1 + q_1w_2 + q_2 w_3 + p_1q_1w_1$;
    \item degree 4: $w_iw_j$ for all $i,j \in \{1, 2, 3\}$.
    \end{itemize}
\begin{theorem}[Theorem 2.1 in \cite{6}]
There exists a $K$-algebra isomorphism $HH^*(R) \simeq \mathcal{A} = K[\mathcal{X}]/\mathcal{I}$.
\end{theorem}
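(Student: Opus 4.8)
The plan is to compute $HH^*(R)$ directly from the $4$-periodic minimal resolution $P_\bullet$ and then match the resulting graded ring against the presentation $\mathcal{A}$. First I would apply the functor $\Hom_{\Lambda}(-,R)$ to $P_\bullet$. Since $P_0 = P_3 = \Lambda$ and $P_1 = P_2 = \Lambda^2$, and $\Hom_{\Lambda}(\Lambda, R) \simeq R$, $\Hom_{\Lambda}(\Lambda^2, R) \simeq R^2$, this yields the $4$-periodic cochain complex
$$R \xrightarrow{d_0^*} R^2 \xrightarrow{d_1^*} R^2 \xrightarrow{d_2^*} R \xrightarrow{d_3^*} R \xrightarrow{d_0^*} R^2 \to \cdots,$$
where each $d_i^*$ is read off from the matrices defining $d_i$ by the rule that right multiplication by $a \otimes b$ dualizes to the map $r \mapsto a\,r\,b$ on $R$. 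I would write out the four maps $d_0^*, d_1^*, d_2^*, d_3^*$ explicitly on the standard basis $B$ of $R$, keeping track of the parameter $d$.

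Next I would compute the cohomology groups as $K$-vector spaces. Using $\Ker$ and $\Ima$ of the $d_i^*$ together with the $4$-periodicity of $P_\bullet$, I would record $\dim_K HH^n(R)$ and fix explicit cocycle representatives: the classes $p_1,\dots,p_4$ of degree $0$ spanning the center $Z(R) = HH^0(R)$, the classes $q_1,q_2$ of degree $1$, the classes $w_1,w_2,w_3$ of degree $2$, and the periodicity class $e$ of degree $4$ coming from the chain self-map of period $4$, whose multiplication realizes the isomorphisms $HH^n \simeq HH^{n+4}$. The candidate isomorphism is then the $K$-algebra homomorphism $\phi: K[\mathcal{X}] \to HH^*(R)$ sending each generator to its named class.

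The ring structure is the substantive part. To evaluate a cup product $a \smile b$ of classes represented by cocycles $f \in \Hom_{\Lambda}(P_m, R)$ and $g \in \Hom_{\Lambda}(P_n, R)$, I would transport it through the comparison morphisms of Section~4, computing it as $\big((f\Phi_m) \smile (g\Phi_n)\big)\Psi_{m+n}$, where $\Phi_m, \Phi_n$ lift $f,g$ to the normalized bar resolution, the inner $\smile$ is the explicit bar cup product, and $\Psi_{m+n}$ pulls the result back to $P_{m+n}$. Running this for all pairs of generators, I would verify that every generator of $\mathcal{I}$ maps to zero in $HH^*(R)$ — the degree-$0$ relations $p_ip_j$, the mixed relations of degrees $1$, $2$, $3$, and $w_iw_j$ in degree $4$ — so that $\phi$ descends to a well-defined map $\bar\phi: \mathcal{A} \to HH^*(R)$.

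Finally I would show $\bar\phi$ is an isomorphism. Surjectivity holds because the named classes generate $HH^*(R)$ as a $K$-algebra, with $e$ used to climb across the $4$-periodicity; injectivity follows from a dimension count, comparing $\dim_K \mathcal{A}_n$ obtained from a monomial basis of $K[\mathcal{X}]/\mathcal{I}$ with the $\dim_K HH^n(R)$ found above for $n = 0,1,2,3$ and then invoking periodicity. The hard part will be the cup-product bookkeeping: since $\Phi$ and $\Psi$ are built recursively out of the weak self-homotopy $t_\bullet$, evaluating even a single product requires unwinding several layers of $t_\bullet$, and the parameter $d$ pervades the formulae, so confirming that \emph{exactly} the listed relations hold and no others is lengthy and error-prone rather than conceptually deep.
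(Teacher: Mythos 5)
Your strategy is sound, but be aware that this paper contains no proof of the statement for you to be compared against: the theorem is imported verbatim (as Theorem 2.1) from the earlier paper \cite{6} of the same series, where the cohomology algebra of $R(2,0,d)$ was computed; the present paper only uses it as input for the $BV$-computations. That said, your outline --- dualize the $4$-periodic minimal resolution $P_\bullet$, compute $\dim_K HH^n(R)$ and explicit representatives, check that the generators of $\mathcal{I}$ vanish on products of the named classes, then conclude via surjectivity and a dimension count with $e$ realizing the periodicity --- is exactly the standard route for this family of algebras and is, in substance, the method of the cited source.

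One concrete error to fix: you have the comparison morphisms backwards. In the paper's conventions $\Phi : P_{\bullet} \longrightarrow \overline{Bar}_{\bullet}(R)$ and $\Psi : \overline{Bar}_{\bullet}(R) \longrightarrow P_{\bullet}$, so a cocycle $f \in \Hom_{\Lambda}(P_m,R)$ is carried to the bar resolution by precomposition with $\Psi_m$, not $\Phi_m$; your expression $(f\Phi_m)\smile(g\Phi_n)$ does not even typecheck, since $f$ and $\Phi_m$ have the same domain $P_m$. The correct transport is $\big((f\Psi_m)\smile(g\Psi_n)\big)\Phi_{m+n}$ --- compare the formula $\Delta(\alpha)=\Delta(f\Psi_n)\Phi_{n-1}$ in Section 4 of the paper. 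Note also that for cup products alone the detour through $\overline{Bar}_{\bullet}(R)$ is unnecessary and is the most error-prone part of your plan: the cup product agrees with the Yoneda product, so one may lift a cocycle $g$ to a chain map $P_{\bullet+n} \longrightarrow P_{\bullet}$ of the minimal resolution and compose with $f$, staying inside the small $4$-periodic complex; the bar resolution is needed in this paper only because $\Delta$ and the Gerstenhaber bracket are defined there. Finally, in the dimension count make sure you go up through degree $4$ (the relations $w_iw_j$ live there), and that multiplication by $e$ is bijective on \emph{both} sides before invoking periodicity: on the $HH^*$ side this is immediate from $P_{n+4}=P_n$, but on the $\mathcal{A}$ side it is a claim about the presentation (every monomial of degree $\geq 5$ is divisible by $e$ modulo $\mathcal{I}$) that has to be verified from the listed relations.
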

Let $P$ be an item of minimal projective resolution  $R$. If $P= R \x R$, then denote by $f$ the homomorphism in  $\Hom_{R^e}(P,R)$, which sends $1 \x 1$ to $f$. If $P= R \x KQ\x R$ (or $P= R \x KQ_1\x R$), then denote by $(f,g)$ the homomorphism in $\Hom_{R^e}(P,R)$, which sends $1\x x \x 1$ (or $1 \x r_x \x 1$) to $f$ and $1 \x y \x 1$ (or $1 \x r_y \x 1$) to $g$. So one can rewrite the generating elements like in \cite{6}
$$\begin{cases}
\text{elements of degree 0:} & p_1 = xy+yx, \ p_2 = xyx, \ p_3 = yxy, \ p_4 = (xy)^2, \\
\text{elements of degree 1:} & q_1 = q_1 = (y, \ 1+dy + xy),\ q_2 =(1 + yx, \ dxy +x), \\
\text{elements of degree 2:} & w_1 = ( x, \  0),\ w_2 = (0, \ y ),\ w_3= ( y,\ x + dxy ),\\
\text{elements of degree 4:} & e=1.
\end{cases}$$

\subsection{Technical lemmas}

It is obvious that $\Delta$ gives us zero on all elements of degree 0. 

\begin{lemma}[Elements of degree 1] We have
$\Delta(q_1) = dp_1$, $\Delta(q_2) = 0$, $\Delta(p_1q_1) = p_2+dp_1$, $\Delta(p_1q_2) = dp_2+p_3$, $\Delta(p_2q_1) = p_3+dp_2$, $\Delta(p_3q_2) = p_2$, $\Delta(p_3q_1) = \Delta(p_2q_2) = p_1$, $\Delta(p_4q_1) = p_2$, $\Delta(p_4q_2) = p_3$.
\end{lemma}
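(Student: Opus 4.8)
The plan is to specialize the general formula for $\Delta$ from Section~4 to the case $n=1$ and then evaluate it on an explicit cochain representative of each listed class. For a degree-one class $\alpha$ choose its representative $f=(f_x,f_y)\in\Hom_{R^e}(P_1,R)$, so that $f(1\x x\x 1)=f_x$ and $f(1\x y\x 1)=f_y$. Since $\Phi_0=\mathrm{id}$ and, by the Lemma of Section~4 together with $\Psi_0=\mathrm{id}$ and $t_0=C$, one has $\Psi_1(1\x b\x 1)=C(b)$, the associated bar-cochain $f\Psi_1$ sends $1\x b\x 1$ to $f(C(b))$. Because $f$ is $R^e$-linear and $C$ is the bimodule derivation, for a monomial $b=\alpha_1\cdots\alpha_m$ (with each $\alpha_j\in\{x,y\}$) this equals $\sum_{j=1}^m \alpha_1\cdots\alpha_{j-1}\,f_{\alpha_j}\,\alpha_{j+1}\cdots\alpha_m$. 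Feeding this into the degree-one instance of the displayed $\Delta$-formula gives
$$\Delta(\alpha)=\sum_{b\in B\setminus\{1\}}\langle f(C(b)),1\rangle\,b^{*},$$
where, by the definition of the form, $\langle u,1\rangle$ is exactly the coefficient of the socle generator $(xy)^2$ in $u$. First I would record once and for all the dual basis: $1^{*}=(xy)^2$, $x^{*}=yxy$, $y^{*}=xyx$, $(xy)^{*}=xy$, $(yx)^{*}=yx$, $(xyx)^{*}=y$, $(yxy)^{*}=x$, $((xy)^2)^{*}=1$.

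Next I would fix the representatives. For the generators $q_1,q_2$ these are read off from the table in Section~5; for the products $p_iq_j$ I would use that each $p_i\in HH^0(R)=Z(R)$ acts on cochains by multiplication, so $p_iq_j$ is represented by $(p_if_x,\,p_if_y)$ once the products are reduced in $R$. Then for each class in the list I would run the sum over $b$: compute $f(C(b))$ in $R$, extract its $(xy)^2$-coefficient, multiply by $b^{*}$, and collect the result, finally matching it against the degree-zero generators $p_1=xy+yx$, $p_2=xyx$, $p_3=yxy$, $p_4=(xy)^2$. For the two classes $p_4q_1$ and $p_4q_2$ this is almost immediate: since $(xy)^2$ lies in the socle it annihilates the radical, so in $f(C(b))=\sum_j(\text{prefix})\,f_{\alpha_j}\,(\text{suffix})$ only the length-one words $b=y$ (resp.\ $b=x$) can survive, whence $\Delta(p_4q_1)=y^{*}=p_2$ and $\Delta(p_4q_2)=x^{*}=p_3$ at once.

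The one genuinely laborious ingredient, and the place where the answer is actually decided, is the multiplication in $R$. The defining relations $x^2=yxy$ and $y^2=xyx+d(xy)^2$ raise word-length, while $x(yx)^2=y(xy)^2=0$ forces every alternating word of length $\ge 5$ to vanish; consequently a product contributes to $\langle-,1\rangle$ only when, after reduction, it produces exactly the length-four word $(xy)^2$. Keeping track of these $d$-weighted socle contributions (they arise from $y^2$, from $y^3=(xy)^2$, and from $xy\cdot xy=(xy)^2$, among others) across all eight basis elements $b$ is the main obstacle and the sole source of the parameter $d$ in the stated formulas; for instance the term $d(xy+yx)$ in $\Delta(q_1)$ comes only from the $b=xy$ and $b=yx$ summands, each of which reduces to a socle component of coefficient $d$. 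The remaining entries are obtained by the same bookkeeping, and I expect no conceptual difficulty beyond careful, consistent reduction to normal form.
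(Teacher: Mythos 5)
Your proposal is correct and follows essentially the same route as the paper: both reduce the computation to the formula $\Delta(\alpha)=\sum_{b\in B\setminus\{1\}}\langle f(C(b)),1\rangle\,b^{*}$ (the $n=1$ specialization of the Section~4 formula, using $\Psi_1(1\x b\x 1)=t_0(b\x 1)=C(b)$) and then evaluate the socle pairings for each representative, which is exactly the table the paper tabulates in its proof. Your dual-basis list, the treatment of $p_iq_j$ via the $HH^0$-action on cochains, and the spot checks (the $d$-contributions at $b\in\{xy,yx\}$ for $q_1$, and the shortcut for $p_4q_1$, $p_4q_2$) all agree with the paper's stated pairing values.
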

\begin{proof}
One need to compute $\langle a C(b), 1\rangle$ for all elements of degree 1. It is easy to see that
$$\langle a C(b), 1\rangle = 
\begin{cases}
1, & a \in \{ p_1q_1, p_3q_2, p_4q_1 \}, b = y \text{ or } \\ 
& a \in \{ p_1q_2, p_2q_1, p_4q_2 \}, b = x \text{ or } a \in \{p_3q_1, p_2q_2\}, b \in \{xy,yx\},\\
d, & a  \in \{q_1, p_1q_1\}, b \in \{xy, yx\} \text{ or } a \in \{p_1q_2, p_2q_1\},  b = y,\\
0, & \text{otherwise.}
\end{cases}$$
Hence we only need to note that $\Delta(a) = \sum \limits_{b \in B} \langle a C(b), 1\rangle b^*$, so the required statement holds.
\end{proof}

\begin{lemma}[Elements of degree 2]
For any combination $a \in HH^2(R)$ of generating elements we have $\Delta(a)=0$.
\end{lemma}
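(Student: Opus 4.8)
The plan is to read off the vanishing from the explicit cochain-level description of $\Delta$, transported from the normalized bar complex to the minimal resolution $P_{\bullet}$ through the comparison morphisms $\Phi,\Psi$ and the weak self-homotopy $t_{\bullet}$. Since $\Delta$ is $K$-linear and $\Delta\colon HH^2(R)\longrightarrow HH^1(R)$, it suffices to verify $\Delta(a)=0$ on a spanning set of $HH^2(R)$. Using the isomorphism $HH^*(R)\simeq\mathcal{A}=K[\mathcal{X}]/\mathcal{I}$, such a spanning set is given by the three generators $w_1,w_2,w_3$ together with the products of lower-degree generators: the pairwise products $q_1^2,q_2^2$ (recall $q_1q_2=0$ in $\mathcal{I}$) and $p_iw_j$, and the triple products $p_iq_jq_k$; products carrying two degree-zero factors already vanish because $p_ip_j\in\mathcal{I}$. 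I treat the generators, the pairwise products and the triple products in turn.

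First I handle $w_1,w_2,w_3$ directly. Each is the cochain in $\Hom_{R^e}(P_2,R)$ specified in the statement (for instance $w_1=(x,0)$, sending $1\x r_x\x 1\mapsto x$ and $1\x r_y\x 1\mapsto 0$). I transport $w_i$ to the bar complex by the recursion for $\Psi$ from Section~4, so that $w_i\Psi_2(1\x a_1\x a_2\x 1)=w_i\bigl(t_1(a_1\,C(a_2))\bigr)$ with $C$ and $t_1$ as in Section~3. For $n=2$ the explicit formula for $\Delta$ established at the end of Section~4 collapses to $\Delta(w_i)(a)=\sum_{b\in B\setminus\{1\}}\langle\,w_i\Psi_2(1\x a\x b\x 1)+w_i\Psi_2(1\x b\x a\x 1),\,1\,\rangle\,b^{*}$, which I then evaluate on the bar images $\Phi_1(1\x x\x 1)=1\x x\x 1$ and $\Phi_1(1\x y\x 1)=1\x y\x 1$ of the generators of $P_1$. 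A direct evaluation of $t_1$ on the terms that occur shows that every socle-pairing $\langle\,\cdot\,,1\rangle$ vanishes, whence $\Delta(w_i)=0$ for $i=1,2,3$.

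The pairwise products $q_1^2,q_2^2$ and $p_iw_j$ are computed by the same procedure: I choose a cocycle representative in $\Hom_{R^e}(P_2,R)$ (for a cup product this is obtained by computing the product on the bar complex via $\Phi$ and $\Psi$), transport it through $\Psi_2$, and evaluate the same formula. Here the degree-two relations of $\mathcal{I}$ do the real work of keeping the computation finite: the vanishing relations $p_2w_1,p_4w_1,p_3w_2,p_4w_2,p_4w_3$ and $q_1q_2$, together with the dependencies $p_1w_1+p_2w_2$, $p_1w_1+p_3w_3$, $p_1w_1+p_4q_1^2$, $p_3w_1+p_1w_2$, $p_1w_3+dp_2w_2$ and the rest, reduce the $p_iw_j$ to a short list of genuinely independent classes. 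The triple products $p_iq_jq_k$ then require no new resolution computation: they follow from the displayed Leibniz identity $\Delta(abc)=\Delta(ab)c+\Delta(ac)b+\Delta(bc)a+\Delta(a)bc+\Delta(b)ac+\Delta(c)ab$ together with $\Delta(p_i)=0$, the values of $\Delta$ on the degree-one generators from the preceding Lemma, and the pairwise values just obtained. I emphasize that one cannot instead invoke the two-variable $BV$-identity to shortcut the pairwise products, since it would require the Gerstenhaber brackets $[q_i,q_j]$ and $[p_i,w_j]$, which are themselves only recovered from $\Delta$; the pairwise products must therefore be computed directly.

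The main obstacle is precisely this direct evaluation through $\Psi_2$: computing $w_i\bigl(t_1(a\,C(b))\bigr)$, and the analogous expressions for the product cochains, over the relevant basis paths $a,b\in B$ is lengthy bookkeeping, made delicate by the fact that $\Psi_2$ mixes the two summands of $P_2$ associated with $r_x$ and $r_y$ and that a cup-product representative is determined only up to coboundary. Shrinking the spanning set by means of the relations of $\mathcal{I}$ is what renders the computation feasible, and checking that every socle-pairing $\langle\,\cdot\,,1\rangle$ that arises is zero is the crux of the argument.
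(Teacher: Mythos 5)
Your proposal follows essentially the same route as the paper's proof: both transport cochains through the comparison maps, reduce $\Delta(a)$ on the generators of $P_1$ to the socle pairings $\sum_{b\neq 1}\langle a\,t_1(b\x x\x 1+xC(b)),1\rangle b^*$ (and the analogous sum for $y$), and verify vanishing on a spanning set of $HH^2(R)$ cut down by the relations of $\mathcal{I}$, the paper doing this by explicitly evaluating $t_1$ and the representatives $w_1,w_2,w_3$, $q_1q_2=(0,0)$, $q_1^2=(x,1)$, $q_2^2=(1,y)$, $p_4q_1^2=(0,(xy)^2)$, $p_4q_2^2=((xy)^2,0)$. The only divergence is organizational: you dispatch the triple products $p_iq_jq_k$ via the six-term Leibniz identity, whereas the paper absorbs them (and all $p_iw_j$) into the listed spanning set through the ring relations; both are valid.
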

\begin{proof}
It is easy to see that for any such $a$ of degree two following formulae hold:
$$\Delta(a)(1 \x x \x 1) = \Delta(a \Psi_2) \Phi_1(1\x x \x 1) = \sum \limits_{b \ne 1} \langle a t_1 (b \x x \x 1 + x C(b)), 1\rangle b^*,$$
$$\Delta(a)(1 \x y \x 1) = \Delta(a \Psi_2) \Phi_1(1\x y \x 1) = \sum \limits_{b \ne 1} \langle a t_1 (b \x y \x 1 + y C(b)), 1\rangle b^*.$$
So we need to compute $t_1 (b \x x \x 1 + x C(b))$. Denote this formula by $\Psi_2(b,x)$:

\begin{enumerate}
\item $\Psi_2(xy,x) = 1 \x r_x \x y  + yx \x r_y \x 1 + y \x r_x \x yx + dy \x r_x \x yxy + dy^2 \x r_x \x (xy)^2$,
\item $\Psi_2(yx,x) = y \x r_x \x 1 + xy \x r_x \x y + 1 \x r_y \x xy + dx \x r_x \x xy + d yxy \x r_x \x y$,
\item $\Psi_2(xyx,x) = xy \x r_x \x 1 + x \x r_y \x xy  + dx^2 \x r_x \x xy + d(xy)^2 \x r_x \x y  + 1 \x r_x \x yx + yx \x r_y \x x + dy \x r_x \x (xy)^2$,
\item $\Psi_2(yxy,x)= y \x r_y \x 1 + 1\x r_y \x y + dy \x r_y \x y + d \x r_y \x xyx +d^2 y \x r_y \x xyx $,
\item $\Psi_2(b,x)=0$ for $b \in \{x, y, (xy)^2\}$.
\end{enumerate}
And also we need to compute $t_1 (b \x y \x 1 + y C(b))$. Denote this by $\Psi_2(b,y)$.
\begin{enumerate}
\item $\Psi_2(xy,y) = x \x r_y \x 1 + yx \x r_y \x x + 1 \x r_x \x yx + d\x r_x \x yxy + dyx\x r_y \x xy + dy \x r_x \x (xy)^2$,
\item $\Psi_2(yx,y) = 1 \x r_y \x x + xy \x r_x \x 1 + x \x r_y \x xy + dx^2 \x r_x \x xy + d(xy)^2 \x r_x \x y  + d \x r_x \x x^2 + dx \x r_x \x x + dx^2 \x r_x \x 1 + dyx \x r_y \x xy$,
\item $\Psi_2(xyx,y) =  y \x r_y \x 1 + 1\x r_y \x y + dy \x r_y \x y + d \x r_y \x xyx +d^2 y \x r_y \x xyx$,
\item $\Psi_2(yxy,y) = yx \x r_y \x 1 + y \x r_x \x yx + dy \x r_x \x yxy + dy^2 \x r_x \x (xy)^2 + 1 \x r_y \x xy + xy \x r_x \x y + d (xy)^2 \x r_x \x y^2 + d yxy \x r_x \x y $,
\item $\Psi_2((xy)^2,y) = y \x r_y \x y + 1 \x r_y \x xyx + dy \x r_y \x xyx$.
\item $\Psi_2(b,y) = 0$ for $b \in \{x, y\}$.
\end{enumerate}
Finally, one should note that
\[q_1q_2 = (0,\ 0), \ q_1^2 = (x, \ 1), \ q_2^2 = (1, \ y), \ p_4q_1^2 = (0, \ (xy)^2), \ p_4q_2^2 = ((xy)^2, \ 0), \] 
hence lemma holds by given computations.
\end{proof}

\begin{lemma}[Elements of degree 2] We have
$\Delta(q_1w_1) = \Delta(q_2w_2) = w_3, \ \Delta(q_2w_1) = q_2^2 + w_2, \ \Delta(q_1w_2) = \Delta(q_2w_3) = q_1^2 + w_1 + d(p_1+1)w_2, \ \Delta(q_1w_3) =  q_2^2 + w_2  + dw_3$.
\end{lemma}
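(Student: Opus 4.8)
The plan is to compute each of the six classes $\Delta(q_iw_j)\in HH^2(R)$ directly from the formula of Theorem 1, exactly one homological degree above the preceding lemma. Since $|q_iw_j|=3$ and $\Delta$ lowers degree by one, the target lies in $HH^2(R)$, so it suffices to determine the representing homomorphism in $\Hom_{R^e}(P_2,R)$, that is, its two values on the generators $1\otimes r_x\otimes 1$ and $1\otimes r_y\otimes 1$ of $P_2=R\otimes KQ_1^*\otimes R$. Using the identity $\Delta(\alpha)=\Delta(\bar\alpha)\Phi_2$, which holds for any bar cocycle $\bar\alpha$ representing $\alpha=q_iw_j$, I would avoid building a cocycle on $P_3$ altogether and instead take $\bar\alpha=\bar q_i\smile\bar w_j$: the two choices are cohomologous, hence yield the same class after applying $\Delta$ and composing with $\Phi_2$.

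First I would write down the bar representatives of the generators from the weak self-homotopy via Lemma 4.1, namely $\bar q_i(a)=q_i(C(a))$ and $\bar w_j(a\otimes a')=w_j\bigl(t_1(a\,C(a'))\bigr)$ for $a,a'\in B$, where $t_1(a\,C(a'))=\Psi_2(1\otimes a\otimes a'\otimes 1)$ is obtained from the $t_1$-evaluations already carried out in the proof of the preceding lemma, so that no fresh homotopy computation is required. The cup product is then read off from the standard bar formula $(\bar q_i\smile\bar w_j)(a_1\otimes a_2\otimes a_3)=\bar q_i(a_1)\cdot\bar w_j(a_2\otimes a_3)$, the dot being multiplication in $R$.

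Next I would specialise the $\Delta$-formula to $n=3$. As the characteristic is $2$ all signs $(-1)^{i(n-1)}$ vanish and the operator becomes a cyclic sum: for a bar $2$-chain $a_1\otimes a_2$,
$$\Delta(\bar\alpha)(a_1\otimes a_2)=\sum_{b\in B\setminus\{1\}}\Bigl(\langle\bar\alpha(a_1\otimes a_2\otimes b),1\rangle+\langle\bar\alpha(a_2\otimes b\otimes a_1),1\rangle+\langle\bar\alpha(b\otimes a_1\otimes a_2),1\rangle\Bigr)b^{*}.$$
Substituting $a_1\otimes a_2=\Phi_2(1\otimes r_x\otimes 1)$ and then $a_1\otimes a_2=\Phi_2(1\otimes r_y\otimes 1)$ produces the two sought values in $R$. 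These chains I would obtain from $\Phi_2=s_1\Phi_1 d_1$, $\Phi_1=s_0\Phi_0 d_0$, the given formulae for $d_0,d_1$, and the bar homotopy $s_n(a_0\otimes\dots\otimes a_n\otimes 1)=1\otimes a_0\otimes\dots\otimes a_n\otimes 1$. The resulting pair of elements of $R$ determines $\Delta(q_iw_j)$ as a homomorphism on $P_2$, which I would match against the explicit descriptions of $w_1,w_2,w_3,q_1^2,q_2^2$ and their products with the $p_i$, reducing modulo $\mathcal I$ to read off the stated expressions.

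The main obstacle is the bookkeeping rather than any conceptual difficulty: the cyclic sum runs over the seven non-unit basis elements in three positions, the cup product forces repeated multiplication in $R$ subject to $X^2+YXY$ and $Y^2+XYX+d(XY)^2$, and both $\Phi_2(1\otimes r_x\otimes 1)$ and $\Phi_2(1\otimes r_y\otimes 1)$ expand into several bar $2$-chains, so each product $q_iw_j$ yields a long tally in which the parameter $d$ must be tracked with care — it is precisely the $d$-linear contributions that account for the terms $d(p_1+1)w_2$ and $dw_3$ in the answer. To organise the work and to guard against coefficient slips I would exploit the involution of $R$ exchanging $X$ and $Y$, which at $d=0$ is an algebra automorphism inducing $q_1\leftrightarrow q_2$, $w_1\leftrightarrow w_2$ and $w_3\mapsto w_3$; by naturality of $\Delta$ it pairs $\Delta(q_1w_1)$ with $\Delta(q_2w_2)$, $\Delta(q_1w_2)$ with $\Delta(q_2w_1)$, and $\Delta(q_1w_3)$ with $\Delta(q_2w_3)$, so that roughly half of the computations can be deduced from the other half and the asserted coincidence $\Delta(q_1w_1)=\Delta(q_2w_2)=w_3$ serves as an internal consistency check. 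As a further cross-check, the $BV$-identity $\Delta(q_iw_j)=[q_i,w_j]+\Delta(q_i)\,w_j$ (valid in characteristic $2$ and using $\Delta(w_j)=0$ from the previous lemma) must be compatible with the Gerstenhaber brackets once these are extracted.
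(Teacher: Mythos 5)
Your strategy is sound and shares its skeleton with the paper's proof --- both rest on the cyclic formula for $\Delta$ dual to Connes' operator, the comparison morphisms built from the weak self-homotopy, and evaluation of the resulting cochain on $\Phi_2(1\otimes r_x\otimes 1)$ and $\Phi_2(1\otimes r_y\otimes 1)$ --- but you diverge at one genuine point: the choice of bar representative for $q_iw_j$. The paper takes the products already known in the minimal resolution from \cite{6} ($q_1w_1=xy$, $q_1w_2=y=q_2w_3$, $q_2w_1=x$, $q_2w_2=yx$, $q_1w_3=x+dyx+d(xy)^2$, viewed as elements of $\Hom_{R^e}(P_3,R)\simeq R$) and pulls them back along $\Psi_3$; the cost is the long tables of $\Psi_3(a_1\otimes a_2\otimes a_3)=t_2\big(a_1t_1(a_2C(a_3))\big)$, i.e.\ two levels of the homotopy, but the pairings $\langle (q_iw_j)\Psi_3(\cdots),1\rangle$ are then immediate and the answer comes out directly in terms of the listed representatives. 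You instead use $(q_i\Psi_1)\smile(w_j\Psi_2)$, which is cohomologous to $(q_iw_j)\Psi_3$ because the comparison isomorphisms respect cup products and $\Delta$ descends to cohomology; this eliminates $\Psi_3$ and $t_2$ altogether (only $C$ and $t_1$ are needed) and does not presuppose the product table of \cite{6}, at the price of a more complicated cocycle to feed into the cyclic sum. Both routes are legitimate.

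Two caveats you should make explicit to close the argument. First, precisely because your representative is only cohomologous to the paper's, the cochain $\Delta(\bar\alpha)\Phi_2\in\Hom_{R^e}(P_2,R)$ is determined only up to coboundary; the final identification must therefore be carried out modulo the image of $\Hom_{R^e}(P_1,R)\longrightarrow\Hom_{R^e}(P_2,R)$ (computable from $d_1$), and not ``modulo $\mathcal I$'': the ideal $\mathcal I$ records relations among cohomology classes and is irrelevant to this cochain-level ambiguity, so a literal mismatch of values on $r_x,r_y$ need not signal an error. Second, the $X\leftrightarrow Y$ involution preserves the relations of $R(2,0,d)$ and the bilinear form only when $d=0$, so naturality of $\Delta$ pairs your six computations only after specializing $d=0$; the $d$-linear terms $d(p_1+1)w_2$ and $dw_3$, which you correctly single out as the delicate part, cannot be transferred by this symmetry and must be computed by hand in each case. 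Likewise, ``no fresh homotopy computation'' is slightly optimistic: the cyclic sum inserts the running basis element $b$ into every slot, so you need $t_1(aC(b))$ for all $b\in B\setminus\{1\}$ and for the various left factors $a$ occurring in $\Phi_2$, not only the combinations tabulated in the preceding lemma --- although right $R$-linearity of $t_1$ reduces all of these to its defining table.
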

\begin{proof} Using certain isomorphism $a_1 \x a_2 \x a_3 \equiv 1 \x a_1 \x a_2 \x a_3 \x 1$ one could show that 
$$\Delta(a)(1 \x r_x \x 1) = \Delta(a\Psi_3) \Phi_2 (1\x r_x \x 1) = \sum \limits_{b \not=1} \langle a\Psi_3(b \x x \x x + x \x x \x b + x \x b \x x), 1 \rangle b^* +$$
$$+ \sum \limits_{b \not=1} \langle a\Psi_3 (b \x y \x x + x \x b \x y + y\x x \x b), 1 \rangle b^* y +  \sum \limits_{b \not=1} \langle a\Psi_3 (b \x yx \x y + yx \x y \x b + y \x b \x yx), 1 \rangle b^* $$
for all $a \in HH^3(R)$. Note that $\Psi_3(a_1 \x a_2 \x a_3) = t_2 (a_1 t_1 (a_2 C(a_3)))$, so we can compute it directly.\par 
First of all, $\Psi_3 \big(b \x x \x x + x \x x \x b + x \x b \x x \big) = t_2 \big(b t_1 (x \x x \x 1) + xt_1 (b \x x \x 1) + xt_1(xC(b)) \big).$
Denote this formula by $\Psi_3(b,x)$. 
\begin{enumerate}
    \item $\Psi_3(x,x) = 1 \x 1$,
    \item $\Psi_3(y,x) = d yxy \x y^2 + dyx \x (xy)^2,$
    \item $\Psi_3(xy,x) = dyxy \x y + dyx \x y^2 +  1\x y$,
    \item $\Psi_3(yx,x) = y \x 1 + dxyx \x 1 + dxy \x x + dx \x yx$,
    \item $\Psi_3(xyx,x) = xy \x 1 + 1\x yx+ x \x y +  dyxy \x yx + yx \x xy + dyxy \x xy$,
    \item $\Psi_3(yxy,x) = 1 \x x + x\x 1$,
    \item $\Psi_3((xy)^2,x) = 1\x yxy$. 
\end{enumerate} \par 

Secondly, $\Psi_3 (b \x y \x x + y \x x \x b + x \x b \x y)$ could be rewritten as $t_2 (xt_1(b \x y \x 1) + yt_1(xC(b)))$. Denote this formula by $\Psi_3(b,x, 2)$. 
\begin{enumerate}
    \item $\Psi_3(x ,x, 2) = dyxy\x y^2 + dyx \x (xy)^2$,
    \item $\Psi_3(xy,x, 2) = yx \x 1 + y\x x + dyx \x y + dy\x xy + d^2 xyx \x xy + dxyx \x x + dxy \x yxy  + 1 \x yx + d\x yxy +  xy \x yx + dxy \x x^2 + dyxy \x (xy)^2+ d yxy \x yx +d^2yxy \x yxy$,
    \item $\Psi_3(xyx,x, 2) = dxy \x (xy)^2 + d^2 yxy \x (xy)^2$,
    \item $\Psi_3(b, x, 2) = 0$ otherwise. 
 \end{enumerate}   \par

Finally, $\Psi_3 (b \x yx \x y + yx \x y \x b + y \x b \x yx) = t_2 (yx t_1(yC(b)) + yt_1(b \x y \x x + by \x x \x 1))$. Denote this formula by $\Psi_3(b,x, 3)$. 
\begin{enumerate}
    \item $\Psi_3(x,x, 3) = 0$,
    \item $\Psi_3(y,x, 3) = d^2xyx\x x + d^2 xy \x x^2 +  dy^2 \x yxy + d (xy)^2 \x xy + 1 \x x + d (xy)^2 \x (xy)^2 + d y\x x$,
    \item $\Psi_3(xy,x, 3) = dy^2 \x (xy)^2 + d(xy)^2 \x xyx$,
    \item $\Psi_3(yx,x, 3) = dy^2 \x (xy)^2 + d(xy)^2 \x xyx + dy\x yxy + dxyx \x xy + dxy \x x + dx\x yx + dxyx\x 1 $,
    \item $\Psi_3(xyx,x, 3) = yx \x 1$,
    \item $\Psi_3(yxy,x, 3) = d(xy)^2 \x (xy)^2 + dxy\x (xy)^2+ d^2 yxy \x (xy)^2$,
    \item $\Psi_3((xy)^2,x, 3) = xy \x x^2 + xyx \x x + dx\x (xy)^2 + dxyx \x yx +dxy\x xyx$. 
  \end{enumerate}  \par  
Further, we have
 $$\Delta(a)(1 \x r_y \x 1) = \Delta(a\Psi_3) \Phi_2 (1\x r_x \x 1) = \sum \limits_{b \not=1} \langle a\Psi_3(b \x y \x y + y \x y \x b + y \x b \x y), 1 \rangle b^* +$$
$$ \sum \limits_{b \not=1} \langle a\Psi_3 (b \x x \x y + y \x b \x x + x \x y \x b), 1 \rangle b^* (x + d xy) + d\sum \limits_{b \not=1} \langle a\Psi_3 (b \x xyx \x y + xyx \x y \x b + y \x b \x xyx), 1 \rangle b^* $$$$+ \sum \limits_{b \not=1} \langle a\Psi_3 (b \x xy \x x + xy \x x \x b + x \x b \x xy), 1 \rangle b^* (1 + dy).$$

Note that $\Psi_3 (b \x y \x y + y \x y \x b + y \x b \x y) = t_2 (yt_1(b\x y \x 1) + yt_1(y C(b)) + b \x r_y \x 1)$. Denote this formula by $\Psi_3(b, y)$. So
\begin{enumerate}
    \item  $\Psi_3(b, y) = 0$ for $b \in \{x, y\}$,
    \item $\Psi_3(xy, y) = x\x 1+dx\x y  + dy^2 \x yxy + d(xy)^2 \x xy + d^2 yxy \x (xy)^2 + dxy \x (xy)^2 $,
    \item $\Psi_3(yx, y) = 1 \x x + d(xy)^2 \x (xy)^2 + d y \x x + d^2 xyx \x x + d^2 xy \x x^2  + dy^2 \x yxy + d(xy)^2 \x xy$,
    \item $\Psi_3(xyx, y) = dxy \x x + dx\x yx + dxyx \x 1$,
    \item $\Psi_3(yxy, y) = 1\x xy +yx\x 1 +y\x x + xy \x yx + dyxy \x yx + dxy \x x^2 + d^2 yxy \x yxy + dyx \x y + dxyx \x x + dxy \x yxy$,
    \item $\Psi_3((xy)^2, y) = xy \x x + x \x yx + xyx \x 1$.
  \end{enumerate}  \par  
  
Observe that $\Psi_3 (x \x y \x b + y \x b \x x + b \x x \x y)$ is equal to $t_2 (xt_1(yC(b)) +yt_1(b\x x \x 1))$. Denote this formula by $\Psi_3(b,y, 2)$. 
\begin{enumerate}
    \item $\Psi_3(x,y, 2)= dyxy \x y^2 + d yx \x (xy)^2$,
    \item $\Psi_3(yx,y, 2) = yx \x xy + dyxy \x xy + xy \x 1 + x \x y  + dyxy \x yx +  1 \x xy  +  d^2xyx \x xy + dy \x xy$,
    \item $\Psi_3(xyx,y, 2) = x\x 1+ 1\x x  + d(xy)^2 \x (xy)^2$,
    \item $\Psi_3(yxy,y, 2) = dyx \x y^2 + dyxy \x y +  dxy \x x + dx \x yx + dxyx\x 1 $,
    \item $\Psi_3((xy)^2,y, 2) = x \x y + y\x x + dxyx \x x + dxy \x x^2$, 
    \item $\Psi_3(b,y, 2) = 0$ for any $b \in \{y, xy \}$. 
  \end{enumerate}  \par

Rewrite $\Psi_3 (xyx \x y \x b + y \x b \x xyx + b \x xyx \x y)$ as $t_2 (xyx t_1(yC(b)) + yt_1(b \x x \x yx + bx \x y \x x + bxy \x x \x 1))$. Denote this formula by $\Psi_3(b,y, 3)$. 
\begin{enumerate}
    \item $\Psi_3(x,y, 3) = dxy \x (xy)^2 + d^2 yxy \x (xy)^2$,
    \item $\Psi_3(y,y, 3) = dxy \x x + dx\x yx + dxyx \x 1 $,
    \item $\Psi_3(xy,y, 3) = y\x x + dxyx \x x + dxy \x x^2$,
    \item $\Psi_3(yx,y, 3) = dxy \x yxy + xy \x yx + d yxy \x (xy)^2 + dyxy \x yx$,
    \item $\Psi_3(xyx,y,3) = xy \x x + x \x yx + xyx \x 1 + 1 \x xyx $,
    \item $\Psi_3(yxy,y, 3) = xy \x x^2 + xyx \x x$,
    \item $\Psi_3((xy)^2,y, 3) = xy \x xy + x \x yxy + xyx\x y + dxyx \x xyx +  y \x xyx $. 
  \end{enumerate}  \par
  
  Finally, denote  $\Psi_3 (xy \x x \x b + x \x b \x xy + b \x xy \x x) = t_2 (xy t_1 (xC(b)) + xt_1(b \x x \x y + bx \x y \x 1))$ by $\Psi_3(b,y, 4)$. 
\begin{enumerate}
    \item $\Psi_3(x,y, 4) = 1\x y +  dyxy \x y + dyx \x y^2$,
    \item $\Psi_3(xy,y, 4) = dyxy \x y^2 + dyx \x (xy)^2 $,
    \item $ \Psi_3(yxy,y, 4) = x \x y $,
    \item $\Psi_3((xy)^2,y, 4) = yx \x y^2 + yxy \x y $, 
   \item $\Psi_3(b,y, 4) = 0$ for $b \in \{y, yx, xyx \}$.
  \end{enumerate}  \par
  
We now only need to note that
  $$q_1w_1 = xy, \ q_1w_2 = y = q_2w_3,\  q_2w_1 = x, \ q_2w_2 = yx, \ q_1w_3 = x+dyx +d(xy)^2,$$ 
hence required formulae holds.

\end{proof}

\begin{lemma}[Степень 4] We have $\Delta(e) = \Delta(p_4 e)  = d^3 p_1 q_1 w_1$, $\Delta(p_1e) = \Delta(p_2e) = \Delta(p_3e) = 0$.
\end{lemma}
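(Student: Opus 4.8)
The plan is to compute $\Delta$ directly on cocycle representatives via the comparison-morphism formula $\Delta(\alpha)=\Delta(f\Psi_4)\Phi_3$ established above. The decisive simplification in degree $4$ is that $P_3=R\otimes R$ is generated over $R^e$ by $1\otimes 1$, so each class $\Delta(\alpha)\in HH^3(R)$ is determined by the single element $\Delta(\alpha)(1\otimes 1)\in R$, in contrast to the degree $2$ and $3$ lemmas where two generators ($x,y$, respectively $r_x,r_y$) had to be tested. Moreover all five classes here are represented by maps of the same shape: $e$ is $1\otimes 1\mapsto 1$, and under the periodicity isomorphism $HH^n\cong HH^{n+4}$ induced by $\smile e$ the products $p_ie$ are represented on $P_4=R\otimes R$ by $1\otimes 1\mapsto p_i$, with $p_1=xy+yx$, $p_2=xyx$, $p_3=yxy$, $p_4=(xy)^2$. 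Thus I would treat a generic cocycle $f_c\colon 1\otimes 1\mapsto c$ with $c\in\{1,p_1,p_2,p_3,p_4\}$ and specialize at the end.

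First I would expand $\Phi_3(1\otimes 1)$ into normalized bar $3$-chains by iterating $\Phi_n=s_{n-1}\Phi_{n-1}d^P_{n-1}$, starting from the explicit formula for $d_2(1\otimes 1)$ and the previously computed $\Phi_1,\Phi_2$. Then, for each resulting chain $1\otimes u_1\otimes u_2\otimes u_3\otimes 1$, I would apply the displayed $\Delta$-formula $\Delta(\alpha)(a_1\otimes\dots\otimes a_{n-1})=\sum_{b\ne 1}\sum_i\langle \alpha(\dots),1\rangle b^*$ with $n=4$, which forces a sweep over all $b\in B\setminus\{1\}$ and all four cyclic insertions. By the recursion $\Psi_n(1\otimes c_1\otimes\dots\otimes c_n\otimes 1)=t_{n-1}\big(c_1\Psi_{n-1}(1\otimes c_2\otimes\dots\otimes c_n\otimes 1)\big)$ proved above, one has $\Psi_4(1\otimes c_1\otimes\dots\otimes c_4\otimes 1)=t_3\big(c_1\Psi_3(1\otimes c_2\otimes c_3\otimes c_4\otimes 1)\big)$, so the only genuinely new ingredient beyond the degree $3$ lemma is a single extra application of $t_3$. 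Since $t_3$ extracts the $(xy)^2\otimes(\,\cdot\,)$-component and $f_c(1\otimes v)=cv$, the pairing $\langle f_c\Psi_4(\dots),1\rangle$ reduces to detecting when $cv$ lands in $\mathrm{Soc}(R)=\langle (xy)^2\rangle$, and the required $\Psi_3$-values are exactly those already tabulated.

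Having computed $\Delta(e)$ this way, I would obtain the four products $p_ie$ painlessly from the Leibniz identity, which in characteristic $2$ reads $\Delta(p_ie)=\Delta(p_i)e+p_i\Delta(e)+[p_i,e]$. Because $\Delta(p_i)=0$ and the degree-$0$ relation $p_ip_1=0$ forces $p_i\Delta(e)=d^3(p_ip_1)q_1w_1=0$, each $\Delta(p_ie)$ collapses to the Gerstenhaber bracket $[p_i,e]$; this already predicts that $\Delta(p_4e)$ should reproduce the bulk answer while $\Delta(p_1e),\Delta(p_2e),\Delta(p_3e)$ should be governed only by a degree-$3$ correction. I would nonetheless read off the actual values $\Delta(p_4e)=d^3p_1q_1w_1$ and $\Delta(p_1e)=\Delta(p_2e)=\Delta(p_3e)=0$ from the same direct computation (now pairing with $cv$ for $c=p_i$), using the Leibniz reduction purely as a consistency check.

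The main obstacle is bookkeeping rather than conceptual difficulty. Correctly expanding $\Phi_3(1\otimes 1)$ yields many bar chains, and for each the cyclic formula forces the full sweep over $B\setminus\{1\}$ and the four rotations, after which one must reduce the images of $C,t_1,t_2,t_3$ modulo the ideal $I$ to isolate the $(xy)^2$-component; the accumulation of the parameter through these maps is precisely what produces the factor $d^3$. The final and easiest-to-overlook step is translating the resulting element of $R\cong\Hom_{R^e}(P_3,R)$ back into the basis of $HH^3(R)$ given by the isomorphism $HH^*(R)\cong\mathcal{A}$, i.e. recognizing the computed socle contribution as the class $d^3\,p_1q_1w_1$ via the known identities $q_1w_1=xy$ and $p_1=xy+yx$ and the cup products of $\mathcal{A}$.
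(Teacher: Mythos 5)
Your plan follows the paper's own proof essentially step for step: the paper likewise evaluates $\Delta(a)(1\otimes 1)=\Delta(a\Psi_4)\Phi_3(1\otimes 1)$ for a generic degree-$4$ cocycle $1\otimes 1\mapsto c$, expands $\Phi_3(1\otimes 1)$ into normalized bar chains, sweeps over $b\in B\setminus\{1\}$ and the four cyclic insertions (its $circ$ sums), and computes $\Psi_4$ via one extra application of $t_3$ on top of the degree-$3$ data, so that the bilinear form detects socle components and all five values $\Delta(e),\Delta(p_ie)$ are read off from the same tables. Your added Leibniz/Tradler reduction $\Delta(p_ie)=[p_i,e]$ (using $\Delta(p_i)=0$ and $p_ip_1=0$) is a sound consistency check --- the paper itself invokes that identity only later, in the corollary on degree-$5$ elements --- but it is not needed for the argument.
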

\begin{proof} By $circ(a_1 \x ...\x a_n)$ we denote $\sum a_{i_1} \x ... \x a_{i_n}$, where in the sum we are counting all indices such that $(i_1, \dots , i_n) = (1,2, \dots ,n)^{-1}$. So for any $a \in HH^4 (R)$ we have 
$$\Delta(a) (1 \x 1) = \Delta(a \Psi_4) \Phi_3 (1\x 1) =  \sum \limits_{b \not=1} \langle a \Psi_4 (circ(b \x x \x x \x x)), 1 \rangle b^*  $$
$$+ \sum \limits_{b \not=1} \langle a \Psi_4 (circ(b \x x \x y \x x)), 1 \rangle b^* y + \sum \limits_{b \not=1} \langle a \Psi_4 (circ(b \x x \x yx \x y)), 1 \rangle b^*  +$$
$$\sum \limits_{b \not=1} \langle a \Psi_4 (circ(b \x y \x y \x y)), 1 \rangle b^* (1 +dy +d^2 xyx) + \sum \limits_{b \not=1} \langle a \Psi_4 (circ(b \x y \x x \x y)), 1 \rangle b^* x +$$
$$\sum \limits_{b \not=1} \langle a \Psi_4 (circ(b \x y \x xy \x x)), 1 \rangle b^* + d\sum \limits_{b \not=1} \langle a \Psi_4 (circ(b \x y \x xyx \x y)), 1 \rangle b^* (1 +dy +d^2 xyx).$$
Denote by $\Psi_4(b,i)$ the value of $b$-th summand in the $i$-th sum from above. Direct computations show us that  
\[ \Psi_4(b, 1) = \begin{cases}
d \x y^2, & b = y\\
d \x y, & b = xy\\
d \x yx + d \x xy, & b = xyx \\ 
1 \x 1, & b = (xy)^2 \\
0, & \text{otherwise}
\end{cases} \ \text{ and } \ 
\Psi_4(b, 2) = \begin{cases}
d \x y^2, & b = x\\
d \x (xy)^2 + d \x yx + d^2 \x yxy, & b = xy\\
d^2 \x (xy)^2, & b = xyx \\ 
0, & \text{otherwise,}
\end{cases}
\]

\[
\Psi_4(b, 3) = \begin{cases}
d \x (xy)^2, & b = x\\
d \x y, & b = yx\\
0, & \text{otherwise}
\end{cases} \ \text{ and } \ 
\Psi_4(b, 4)+ \Psi_4(b, 7)= \begin{cases}
d^2 \x 1 , & b = y\\
d \x x^2, & b \in \{xy, yx\}\\
d \x 1, & b = xyx \\ 
1 \x 1, & b = (xy)^2 \\
0, & \text{otherwise,}
\end{cases}
\]
\[
\Psi_4(b,5) = \begin{cases}
d^2 \x (xy)^2, & b = xy\\
d \x yx + d^2 \x yxy, & b = yxy\\
0, & \text{otherwise}
\end{cases} \ \text{ and } \ 
\Psi_4(b,6) = \begin{cases}
d \x yxy, & b = xy\\
0, & \text{otherwise.}
\end{cases}
\]
So the required formulae now can be deduced from the above-given computations.
\end{proof}

\begin{remark}
It is useful to know the direct form of the map $\Phi_4$:
$$\Phi_4 (1 \x 1) = \sum_b 1 \x  b \x  x \x  x \x  x \x  b^* + \sum_b 1 \x  b \x  x \x  y \x  x \x  yb^*+ $$
$$\sum_b 1 \x  b \x  x \x  yx \x  y \x  b^* + \sum_b 1 \x  b \x  y \x  y \x  y \x (1+dy + d^2 xyx)b^* + \sum_b 1 \x  b \x  y \x  x \x y \x  xb^* + $$
$$ \sum_b 1 \x  b \x  y \x  xy \x  x \x  b^* + \sum_b 1 \x  b \x  y \x  xyx \x  y \x  (d+d^2y + d^3 xyx)b^* +$$
$$d \x  xyx \x  x \x  x \x  x \x  xyx+ d \x  xyx \x  x \x  y \x  x \x  (xy)^2 + d \x  xyx \x  x \x  yx \x  y \x  xyx +$$
$$d \x  xyx \x  y \x  y \x  y \x  y^2+d \x  xyx \x  y \x  xy \x  x \x  xyx + d^2 \x  xyx \x  y \x  xyx \x  y \x  y^2.$$

We denote by $\Phi_4^i$ the $i$-th summand from this expression for any $ 1 \leq i \leq 13$ (here we use the fixed order of summands as we write in the above presented formula).
\end{remark}

\subsection{Gerstenhaber brackets}
\begin{lemma}  We have $[q_1,e] =0$ and $[q_2, e] = dp_2e$.
\end{lemma}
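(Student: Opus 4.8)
The plan is to invoke the defining $BV$-identity, which in characteristic $2$ loses all signs and reads
$$[a,b] = \Delta(a \smile b) + \Delta(a) \smile b + a \smile \Delta(b).$$
Taking $a = q_i$ and $b = e$ gives $[q_i,e] = \Delta(q_i e) + \Delta(q_i)\,e + q_i\,\Delta(e)$, so everything is reduced to one genuinely new quantity, namely $\Delta(q_i e)$. The two remaining terms are already available: by the lemma on elements of degree $1$ we have $\Delta(q_1) = dp_1$ and $\Delta(q_2)=0$, and by the lemma on elements of degree $4$ we have $\Delta(e) = d^3 p_1 q_1 w_1$.

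First I would dispose of the correction terms using the relations of $\mathcal{I}$. For $i=2$ the correction is $q_2\,\Delta(e) = d^3 p_1 (q_1 q_2) w_1$, which vanishes because $q_1 q_2 \in \mathcal{I}$; hence $[q_2,e] = \Delta(q_2 e)$ and the whole statement becomes $\Delta(q_2 e) = d p_2 e$. For $i=1$ the correction is $dp_1 e + d^3 p_1 q_1^2 w_1$, so the claim $[q_1,e]=0$ is equivalent to the identity $\Delta(q_1 e) = dp_1 e + d^3 p_1 q_1^2 w_1$ holding in $\mathcal{A}=K[\mathcal{X}]/\mathcal{I}$; the degree-$2$ relations (in particular $p_1 w_1 + p_4 q_1^2$) will be used to match the two sides after the computation of $\Delta(q_1 e)$.

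The core of the argument is therefore the evaluation of $\Delta(q_i e)$, a class in $HH^4(R)$ obtained from a class in $HH^5(R)$. Since multiplication by $e$ realizes the $4$-periodicity isomorphism and $e$ is represented by the cocycle sending the generator of $P_4 = R \x R$ to $1$, the product $q_i e$ is represented on $P_5 \simeq P_1$ by the very same pair as $q_i$ on $P_1$, namely $q_1 = (y,\,1+dy+xy)$ and $q_2 = (1+yx,\,dxy+x)$. I would then apply the general formula $\Delta(\alpha) = \Delta(\alpha\Psi_5)\Phi_4$ together with the explicit expansion of $\Phi_4(1 \x 1)$ recorded in the Remark immediately preceding this lemma. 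Concretely $\Delta(q_i e)(1 \x 1) = \Delta\big((q_i e)\Psi_5\big)\big(\Phi_4(1\x 1)\big)$, and on each of the thirteen summands $\Phi_4^{\,j}$ one uses the degree-$5$ $\Delta$-formula: cyclically permute the five middle factors, insert each basis element $b \in B \setminus\{1\}$, evaluate $q_i e$ through the recursion $\Psi_5(1 \x a_1 \x \dots \x a_5 \x 1) = t_4\big(a_1 \Psi_4(1 \x a_2 \x \dots \x a_5 \x 1)\big)$ (with $t_4 = t_0$ by periodicity), and pair the output with $1$ via the socle form $\langle -,- \rangle$. Reading the surviving value back as a class in $HH^4(R)$ by comparison with the representatives $e=1$ and $p_k e$, and then simplifying $\Delta(q_ie)+\Delta(q_i)e+q_i\Delta(e)$ in $\mathcal{A}$, should yield $0$ for $i=1$ and $dp_2 e$ for $i=2$.

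I expect the main obstacle to be precisely this last evaluation. The chain $\Phi_4(1 \x 1)$ is a sum of thirteen length-$4$ bar-terms, and for each of them the degree-$5$ $\Delta$-formula produces a fivefold cyclic sum over all $b \in B\setminus\{1\}$, so tracking which summands actually fall into the socle, and hence pair nontrivially with $1$, is lengthy and delicate. What keeps the surviving terms finite and the final collapse feasible are the periodicity identities $t_{n+4}=t_n$ (so that $\Psi_5$ is computed from the already-known $t_0,\dots,t_2$) together with the vanishing relations $p_ip_j=0$ and $q_1q_2=0$ of $\mathcal{I}$; the clean emergence of $dp_2 e$ and $0$ then serves as the consistency check that the bookkeeping has been carried out correctly.
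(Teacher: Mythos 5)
Your strategy is sound in principle and runs in the opposite logical direction to the paper's: the paper computes the brackets directly, writing $[a,e](1\otimes 1)=(a\Psi_1\circ e\Psi_4)\Phi_4(1\otimes 1)+(e\Psi_4\circ a\Psi_1)\Phi_4(1\otimes 1)$, killing the first term via the observation $e\Psi_4\Phi_4(1\otimes 1)=1$, and evaluating the second through the four circle-product functions $F_i^a$; only afterwards, in the Corollary, does it recover $\Delta(q_1e)=dp_1e$ and $\Delta(q_2e)=dp_2e$ from the Tradler identity $\Delta(ab)=\Delta(a)b+a\Delta(b)+[a,b]$. You propose to run that identity backwards, which is legitimate and non-circular: you take $\Delta(q_1)=dp_1$, $\Delta(q_2)=0$, $\Delta(e)=d^3p_1q_1w_1$ from the earlier lemmas, your treatment of the correction terms ($q_1q_2=0$, and $p_1q_1^2w_1=0$ in $\mathcal{A}$) is right, and your identification of the representative of $q_ie$ on $P_5=P_1$ via the periodicity lifting of $e$ is correct.

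The genuine gap is that the one step carrying all of the content --- the actual evaluation of $\Delta(q_ie)(1\otimes 1)=\Delta\bigl((q_ie)\Psi_5\bigr)\Phi_4(1\otimes 1)$ --- is never performed; you describe the procedure and state what it ``should yield''. Everything you do write out is the routine part, and the lemma \emph{is} precisely the deferred computation: in the paper the corresponding work (the $F_i^a$ bookkeeping) occupies several pages. Note moreover that your route is, if anything, heavier than the paper's: it requires, for each of the thirteen summands of $\Phi_4(1\otimes 1)$, a five-fold cyclic sum with an inserted $b\in B\setminus\{1\}$, each term demanding the five-level recursion $\Psi_5(1\otimes a_1\otimes\dots\otimes a_5\otimes 1)=t_4\bigl(a_1t_3(a_2t_2(a_3t_1(a_4C(a_5))))\bigr)$, whereas the paper only ever evaluates the four-level $\Psi_4$ in four circle-product slots and disposes of $(a\Psi_1\circ e\Psi_4)\Phi_4$ by a one-line argument. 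So until the degree-$5$ evaluation is carried out and shown to produce $dp_1e$ and $dp_2e$, nothing is proved; as it stands the proposal is a plausible plan whose hardest part is left open.
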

\begin{proof}
It is not hard to show that for any $a \in HH^1(R), e \in HH^4(R)$ we have
$$[a,e] (1 \x 1) = (a \Psi_1 \circ e\Psi_4) \Phi_4 (1\x 1) + (e \Psi_4 \circ a \Psi_1) \Phi_4 (1\x 1).$$
Now observe that $\Phi_4 (1\x 1) = \sum \limits_{b \in B} 1 \x b \Phi_3 (1 \x 1) b^* + d \x xyx \Phi_3 (1 \x 1)xyx$. If we apply $d_3^{Bar}$ to $1 \x b \Phi_3 (1\x 1) b^*$, and then apply $t_3 \Psi_3$ to the resulting formula, then the only non-zero summand will be $b \Phi_3 (1 \x 1) b^*$: indeed, $t_3( 1 \cdot \Psi_3(s)) = t_3t_2 (s) = 0$ for any $s$ from the domain of $\Psi_3$, so required equality holds. Hence we have
$$\Psi_4 \Phi_4 = t_3 \Psi_3 d_3^{Bar} \Phi_4 = \sum_b t_3 (b \Psi_3 \Phi_3 )b^*+ dt_3 (xyx \Psi_3 \Phi_3 )xyx,$$
and $\Psi_3 \Phi_3 (1 \x 1) = t_2 (xt_1(x \cdot x \cdot 1)) + t_2(yt_1( y \cdot y \cdot 1))(1 + dy +d^2 xyx) = t_2 (x \cdot r_x \cdot 1) = 1 \x 1$, so
$$(a \Psi_1 \circ e\Psi_4) \Phi_4 (1\x 1) = (a \Psi_1) (e \Psi_4 \Phi_4 (1\x 1)) = (a \Psi_1) (1) = 0.$$
Now we only need to calculate the second summand from the expression given above for $[a,e] (1 \x 1)$. By definition of the function $e \Psi_4 \circ a \Psi_1$ it is equal to the sum of four functions $F_i^a$ for any generating $a \in HH^1(R)$ and any $1 \leq i \leq 4$. For the calculations one need to know the values of $a \Psi_1 (b)$ for any $b \in B$:
$$q_1 \Psi_1 (b) =  \begin{cases}
y, & b = x \\
1 + xy + dy, & b = y \\
x + dxy + y^2 , & b = xy \\
x + dyx + d(xy)^2, & b = yx \\
yxy+ dxyx, & b = xyx \\
xy + yx, & b = yxy \\
xyx, & b = (xy)^2, 
\end{cases}
\quad
q_2 \Psi_1 (b) =  \begin{cases}
1 + yx, & b = x \\
d xy + x, & b = y \\
y, & b = xy \\
y +  x^2 + dxyx, & b = yx \\
xy + yx, & b = xyx \\
xyx, & b = yxy\\
yxy, & b = (xy)^2.
\end{cases}
$$

It is easy to see that all summands from $\Phi_4 (1\x 1)$ have the form $1 \x a_1 \x \dots \x a_5 \x a_6$, and if $a_4a_5 \in B$, then these summands gives zero after applying $F_1^a$ or $F_2^a$ to them. \par 

1) Denote by $f^a_i(b)$ the value of $(e \Psi_4 \circ_1 q_a\Psi_1) (1 \x b \x a_1 \x a_2 \x a_3 \x a_4)$, where $1 \x b \x a_1 \x a_2 \x a_3 \x a_4$ is a summand in $\Phi_4^i$. So

$$f^1_1 (b) = \begin{cases}
d xy, & b = xy\\
dyx, & b= yx\\
0, & \text{otherwise}
\end{cases} \text{ and } \quad 
f^2_1 (b)  = 0. $$
Now observe that
$$f^2_{8} (xyx) = d \cdot e t_3 (a \Psi_1 (xyx) \x 1)  xyx = 0 \text{ and } f^1_{8} (xyx)  =0,$$
so 
$$F_1^{q_1} = d(xy+yx) \text{ and } F_1^{q_2} =0.$$

2) By $g^a_i(b)$ we denote the value of $(e \Psi_4 \circ_2 q_a \Psi_1) (1 \x b \x a_1 \x a_2 \x a_3 \x a_4)$, where $1 \x b \x a_1 \x a_2 \x a_3 \x a_4$ is a summand of $\Phi_4^i$. We obtain
$$g_1^1 (b) =  0, \quad g_1^2 (b) =\begin{cases}
y, & b = xyx \\
0, & \text{otherwise,}
\end{cases} \quad 
g_4^1 (b) = \begin{cases}
x , & b = yxy\\
0, & \text{otherwise,}
\end{cases}$$
and so $g^2_{8} (xyx)  = dxyx$, $g^1_{8} (xyx) =0$. It is not hard to show that $\sum_b g_4^2 (b) = d \sum_b g_4^1(b) + \sum_b et_3 (b t_2 (x\x r_y \x 1))(1+dy+d^2xyx)b^*$. Now observe that the summands of the second sum gives us zero for all $b \in B$. Finally, one has $g_{11}^a(xyx) = 0$ for any generating $a \in HH^1(R)$. To sum up, we can conclude that
$$F_2^{q_1} = x \text{ and } F_2^{q_2} =  y + dx + d xyx.$$

3) Denote by $h^a_i(b)$ the value of $(e \Psi_4 \circ_3 q_a \Psi_1) (1 \x b \x a_1 \x a_2 \x a_3 \x a_4)$, where $1 \x b \x a_1 \x a_2 \x a_3 \x a_4$ is a summand of $\Phi_4^i$. Then we have $h_1^2(b) = 0$ and $h_1^1(b) = 0$ for any $ b \in B$ by definitions of $t_i$. Hence $h_{8}^1(xyx) = h_{8}^2(xyx) = 0$. So,
$$h_2^a(b) = 
\begin{cases} y, & b = (xy)^2 \text{ and } a= q_2\\
0, & \text{otherwise}
\end{cases} \text{  and  } h_3^a(b) = 0 \text{ for any } a \in HH^4(R),$$ 
hence $h_3^a(b) = h_{13}^a(b) = 0$ for any $a$. It is easy to see that $h_4^1(b) = 0$ and $h_4^2(b) = 0$ for any $b$ because $t_1(x \x y \x 1) = 0$. Analogically we have $ h_{11}^a (b) = 0$ for any $a$. Moreover, since $t_2(y t_1 (a\Psi_1 (x) \x y \x 1)) = 0$ we have $h_5^a (b) = 0$. Furthermore,
$$h_7^1(b) = \begin{cases}
d xy, & b = xyx\\
x , & b = (xy)^2\\
0, & \text{ otherwise}
\end{cases} \text{ and } h_7^2 (b) = 0.$$
It is easy to see that $h^a_{12} (xyx) = 0$ and $h_7^a(b) = 0$ for any $b \in B$ and any $a \in HH^1(R)$, so we have $ h_{13}^a(xyx) = 0$. Hence
$$F_3^{q_1} = x + dxy \text{ and } F_3^{q_2} = y.$$

4) It remains to describe the $F_4^a$ for any $a \in \{1, 2\}$. Denote by $k^a_i(b)$ the value of $(e \Psi_4 \circ_4 q_a \Psi_1) (1 \x b \x a_1 \x a_2 \x a_3 \x a_4)$, where $1 \x b \x a_1 \x a_2 \x a_3 \x a_4$ is a summand of $\Phi_4^i$. For the calculation we need the following formulae:
$$C(a \Psi_1 (x)) = \begin{cases}
1 \x y \x 1, & a = q_1\\
y \x x \x 1 + 1 \x y \x x, & a = q_2,
\end{cases}$$ 
$$C(a \Psi_1 (y)) = \begin{cases}
1 \x x \x y + x \x y \x 1 + d\x y \x 1, & a = q_1\\
d \x x \x y + dx \x y \x 1 + 1 \x x \x 1, & a = q_2.
\end{cases}$$ 
It follows from the definitions of $t_i$ and $\Phi_4$ that $k_7^1((xy)^2) = dxy$, $k^2_7((xy)^2)= dx$ and $k_i^j(b) = 0$ for any $(i,b) \not= (7,(xy)^2)$ and any $j \in \{1,2\}$. So we have
\[F_4^{q_1} = dxy \text{ and } F_4^{q_2} = \sum \limits_{b,i} k_i^2(b) = dx.
\]
It remains to compute the Gerstenhaber brackets:
\[[q_1,e] = (q_1 \Psi_1 \circ e\Psi_4) \Phi_4 (1\x 1) + (e \Psi_4 \circ q_1 \Psi_1) \Phi_4 (1\x 1) = \sum_{i=1}^4 F_i^{q_1} = dxy+dyx \equiv 0,\]
\[[q_2,e] =(q_2 \Psi_1 \circ e\Psi_4) \Phi_4 (1\x 1) + (e \Psi_4 \circ q_2 \Psi_1) \Phi_4 (1\x 1) = \sum_{i=1}^4 F_i^{q_2} = dxyx.\]

\end{proof}

\begin{lemma}
$[v,e] = 0$ for any $v \in \{w_1, w_2, w_3\}$.
\end{lemma}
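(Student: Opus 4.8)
The plan is to reduce everything to a single $\Delta$-computation by exploiting the $BV$-identity, and then to carry out that computation on the periodic resolution. Since $|v|=2$ and $|e|=4$, the bracket $[v,e]$ lives in $HH^5(R)$; by the $4$-periodicity of $P_\bullet$ this group is computed on $P_5=P_1=R\otimes KQ_1\otimes R$, so it suffices to determine the pair of values of $[v,e]$ on the two generators $1\x x\x 1$ and $1\x y\x 1$ and to read off the resulting element of $HH^1(R)$ in the $(\,\cdot\,,\cdot\,)$-notation.

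First I would invoke the defining $BV$-relation. Over a field of characteristic two all signs disappear, so for $v\in\{w_1,w_2,w_3\}$ one gets $[v,e]=\Delta(ve)+\Delta(v)\,e+v\,\Delta(e)$. The first degree-$2$ lemma gives $\Delta(v)=0$, and from $\Delta(e)=d^3p_1q_1w_1$ together with the degree-$4$ relation $w_iw_1\in\mathcal I$ (graded commutativity lets me move the even classes freely) I obtain $v\,\Delta(e)=d^3p_1q_1\,(vw_1)=0$. Hence the statement is equivalent to the single assertion $\Delta(ve)=0$ for each $v$, which already halves the bookkeeping relative to computing the two Gerstenhaber circle-products directly.

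To establish $\Delta(ve)=0$ I would compute on cochains. The class $ve$ lives in $HH^6(R)$, represented on $P_6=P_2=R\otimes KQ_1^*\otimes R$, and I would apply the $\Delta$-formula $\Delta(\alpha)(\xi)=\Delta(\alpha\Psi_6)\Phi_5(\xi)$ on $\xi\in\{1\x x\x1,\,1\x y\x1\}$. Here I need the explicit map $\Phi_5=s_4\Phi_4 d_4^P$; writing $d_4^P=d_0^P$ and feeding the remark's formula for $\Phi_4(1\x1)$ through left multiplication and $s_4$, the right-multiplication summands $\Phi_4(1\x1)\cdot x$ drop out because $s_4$ places a unit into a bar slot, which is zero in $\overline R$. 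With $\Phi_5$ in hand, and using the explicit forms $w_1=(x,0)$, $w_2=(0,y)$, $w_3=(y,x+dxy)$, the value of the cup product $ve$ on $P_6$, the homotopy data $t_1,t_2,t_3$ and the derivation $C$, together with the tables $\Psi_4(b,i)$ already produced in the degree-$4$ lemma, I would expand $\Delta(ve)$, pair every monomial against the socle form $\langle-,1\rangle$, and reduce modulo characteristic two and the ideal $\mathcal I$; I expect complete cancellation, giving $\Delta(ve)=0$ and hence $[v,e]=0$. As a cross-check one can instead run the direct template of the previous lemma, $[v,e]=(v\Psi_2\circ e\Psi_4)\Phi_5+(e\Psi_4\circ v\Psi_2)\Phi_5$, where the identity $e\Psi_4\Phi_4(1\x1)=1$ simplifies the telescoping but, unlike the degree-$1$ case, does not by itself annihilate the first summand, since $v\Psi_2$ carries two arguments.

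The main obstacle is purely the volume of the computation: once $\Phi_5$ is assembled, $\Delta(ve)$ (equivalently $e\Psi_4\circ v\Psi_2$) produces a large number of monomials, and the only real risk is a bookkeeping slip in tracking the socle-dual basis elements $b^*$ and the scalars $d,d^2,d^3$. The periodicity $t_{n+4}=t_n$ and the finiteness of the basis $B$ keep the expansion finite and guarantee that, after reduction in $\mathcal A=K[\mathcal X]/\mathcal I$, each of $[w_1,e],[w_2,e],[w_3,e]$ collapses to zero.
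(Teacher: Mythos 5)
Your opening reduction is correct, and it is a genuinely different route from the paper's. The paper attacks the bracket head-on, writing $[v,e](1\x a\x 1)=\big((v\Psi_2)\circ(e\Psi_4)\big)\Phi_5(1\x a\x 1)+\big((e\Psi_4)\circ(v\Psi_2)\big)\Phi_5(1\x a\x 1)$, decomposing the two circle products into the families $S_1^v,S_2^v$ and $F_1^v,\dots,F_4^v$, and verifying by explicit tables that everything cancels; only afterwards does it deduce $\Delta(ve)=0$ (its Corollary) from the bracket via the Tradler identity. You run that implication backwards: in characteristic two $[v,e]=\Delta(ve)+\Delta(v)e+v\Delta(e)$, and since $\Delta(v)=0$ (degree-two lemma), $\Delta(e)=d^3p_1q_1w_1$ (degree-four lemma), and $w_1v=w_1w_i\in\mathcal{I}$ kills $v\Delta(e)$, the lemma becomes equivalent to the single assertion $\Delta(ve)=0$. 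This inversion is legitimate and not circular, provided $\Delta(ve)=0$ is established independently of the Corollary; your assembly of $\Phi_5=s_4\Phi_4 d_4^P$ (with the right-multiplied summands dying because $s_4$ puts a unit into a bar slot of $\overline{R}$) is also correct, as is your warning that the identity $e\Psi_4\Phi_4(1\x 1)=1$ does not annihilate the first circle product in this degree.

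The genuine gap is that the equivalent statement $\Delta(ve)=0$ is never actually proved: you describe the machinery and then assert that you ``expect complete cancellation.'' For this lemma the cancellation \emph{is} the entire mathematical content, so the proof is missing exactly where it matters. Moreover, the claim that the reduction ``halves the bookkeeping'' is not right. Computing $\Delta(ve)=\Delta\big((ve)\Psi_6\big)\Phi_5$ requires the cyclic sum over six positions with insertion of every $b\in B\setminus\{1\}$, applied to every summand of $\Phi_5(1\x x\x 1)$ and $\Phi_5(1\x y\x 1)$; each resulting term needs $\Psi_6(\cdots)=t_1\big(a_1t_0(a_2\Psi_4(\cdots))\big)$, where the inner $\Psi_4$ must be evaluated on individual rotated-and-$b$-inserted $4$-tensors, and these are not the circ-symmetrized tensors tabulated in the degree-four lemma, so those tables cannot simply be recycled. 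The workload is therefore at least comparable to the paper's six families of circle-product evaluations. Until that computation (or the paper's direct one) is carried out in full, you have reduced the statement $[v,e]=0$ to another open computation, not proved it.
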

\begin{proof}
For any $v \in HH^2(R)$ and any $e \in HH^4(R)$ we have
$$[v,e] (1 \x a \x 1) = \big( (v \Psi_2) \circ (e\Psi_4) \big) \Phi_5 (1 \x a \x 1) + \big( (e\Psi_4)\circ (v \Psi_2) \big) \Phi_5 (1 \x a \x 1).$$ 
We now need to calculate $\big( (v \Psi_2) \circ (e\Psi_4) \big) \Phi_5( 1 \x a \x 1)$:
\[ 
\big( (v \Psi_2) \circ (e\Psi_4) \big) \Phi_5( 1 \x a \x 1) = \sum_{i=1}^2 \big( (v \Psi_2) \circ_i (e\Psi_4) \big) \Phi_5( 1 \x a \x 1),
\]
and we denote the summands of this sum by $S_1^v$ and $S_2^v$ respectively. It is easy to see that $S_2^v = 0$ for any $v \in \{ w_1, w_2, w_3\}$: indeed we have
\[ S_2 (a_1 \x .. \x a_5 \x a_6) = v t_1 (a_1 C(et_3 (a_2 t_2 (a_3 t_1( a_4 C(a_5) ))))) \cdot a_6, \]
and this formula equals zero on all summands of the form $1 \x a_1 \dots a_5 \x a_6$ from the definition of $\Phi_5$ because $C(1) = 0$ and $et_3(b \x 1) =1$ for $ b = (xy)^k $ and equals zero elsewhere. Let us proof that $S_1^v$ gives us zero on all summands of $\Phi_5 (1\x a \x 1)$ except maybe first, fourth, seventh and eleventh summands:
\[
S_1^{v} (a \x xyx \x x \x x \x x \x y) = \begin{cases} dyxy, & a = x \text{ and } v = w_1 \\
d(xy)^2, & a = x \text{ and } v = w_3 \\ 
0, & \text{otherwise,} 
\end{cases}
\]
\[
S_1^{v} \big(a \x (xy)^2 \x y \x y \x y \x (1 +dy + d^2 xyx) \big) = \begin{cases} dy + dxyx, & a = y \text{ and } v = w_2 \\
dx, & a = y \text{ and } v = w_3 \\ 
0, & \text{otherwise,} 
\end{cases}
\]
\[
S_1^{v} \big(a \x (xy)^2 \x y \x xyx \x y \x (d +d^2y + d^3 xyx) \big) = \begin{cases} dy + dxyx, & a = y \text{ and } v = w_2 \\
dx, & a = y \text{ and } v = w_3 \\ 
0, & \text{otherwise,} 
\end{cases}
\]
and $S_1^v$ gives us zero on all other combinations of elements $a_i$. So we have
\[ \big( (v \Psi_2) \circ (e\Psi_4) \big) \Phi_5( 1 \x a \x 1) = \begin{cases} dyxy, & a = x \text{ and } v = w_1 \\
d(xy)^2, & a = x \text{ and } v = w_3 \\ 
0, & \text{otherwise.} 
\end{cases}
\]

Now we need to compute $\big( (e\Psi_4)\circ (v \Psi_2) \big) \Phi_5 (1 \x a \x 1) = \sum \limits_{i=1}^4 \big( (e\Psi_4)\circ_i (v \Psi_2) \big) \Phi_5 (1 \x a \x 1)$. We denote by $F^v_i$ the summands of this sum for any $1 \leq i \leq 4$. It is easy to see that $F^v_1$, $F^v_2$ and $F_4^v$ may {\it not} equal to zero only for combinations of elements from the first, fourth, seventh and eleventh summands of $\Phi_5 (1 \x a \x 1)$, because any other summand has the form $a_1 \dots a_5 \x a_6$, where $t_1 (a_4 C(a_5) ) = 0$. \par 
1) Consider the function $F_1^v$. Obviously $t_2 \big(y t_1 (y \x y \x 1) \big) = 0$, so $F_1^v$ equals zero on the fourth and eleventh summands. It remains to show that
\[ et_3 \Big(vt_1 \big(aC(b) \big) t_2 \big(x t_1 (x \x x \x 1)\big) \Big) b^* = et_3\big(vt_1 (aC(b)) \x 1 \big) b^* \]
and for the first summand

\[ F_1^v (a \x b \x x \x x \x x \x b^*) = et_3\big(vt_1 (aC(b)) \x 1 \big) b^* = \]
\[
 = \begin{cases}
xy, & a =x,  \text{ } b = xy \text{ and } v = w_1 \\
dxy, & a =x,  \text{ } b = xy \text{ and } v = w_3 \\
dyx, & a =y,  \text{ } b = yx \text{ and } v = w_1 \\
yx, & a =y, \text{ } b = yx \text{ and } v = w_2 \\
y, & a = x, \text{ } b = xyx \text{ and } v = w_2 \\
x, & a = y, \text{ } b = yxy \text{ and } v = w_1 \\
1, & b = (xy)^2 \text{ and } (a,v) = (x, w_1) \text{ or } (a,v) = (y, w_2)\\
0, & \text{otherwise.}
\end{cases}
\]

So for the eighth summand we have
\[ d F_1^v (a \x xyx \x x \x x \x x \x xyx) = 
\begin{cases}
dxyx, & a = x \text{ and } v = w_2 \\
0, & \text{otherwise.}
\end{cases}
\]
And from the definitions of $t_i$ we show that for any another combination $a_1 \x  \dots \x a_6$ from the summands of $\Phi_5 (1 \x a \x 1)$ gives us zero. So we have
\[ F_1^{w_1} = \begin{cases} 1+xy, & a =x \\x + dyx, & a = y, \end{cases} \quad F_1^{w_2} = \begin{cases} y+dxyx, & a = x \\ 1+yx, & a = y, \end{cases} \]
\[
 F_1^{w_3} = \begin{cases} dxy, & a = x \\ 0, & a = y. \end{cases}\]
 2) In the case of $F_2^v$ for the first summand we have
\[ F_2^{v}(1 \x a \x b \x x \x x \x x \x b^*) = \begin{cases} 
et_3\big(at_2 (yx \x r_x \x 1 + (xy)^2 \x r_x \x 1)\big)yx, & b = yx \text{ and } v = w_1 \\
et_3\big(at_2 (y^2 \x r_x \x 1)\big)yx, & b = yx \text{ and } v = w_3 \\
et_3\big(at_2 (xyx \x r_x \x 1)\big)y, & b = xyx \text{ and } v = w_1 \\
et_3\big(at_2 ((xy)^2 \x r_x \x 1)\big)y, & b = xyx \text{ and } v = w_2 \\
et_3\Big(at_2 \big((yx + xy + 2d yxy)\x r_x \x 1\big)\Big)x, & b = yxy \text{ and } v = w_3 \\
et_3\big(at_2 ((xy)^2\x r_x \x 1)\big), & b = (xy)^2 \text{ and } v = w_1 \\
et_3\big(at_2 (xyx\x r_x \x 1)\big), & b = (xy)^2 \text{ and } v = w_3 \\
0, & \text{otherwise,}
\end{cases} =
\]
\[ = \begin{cases} 
yx, & a= x, \text{ } b = yx \text{ and } v = w_1 \\
d(xy)^2 + dyx, & a= x, \text{ }  b = yx \text{ and } v = w_3 \\
dyxy, & a= x, \text{ } b = xyx \text{ and } v = w_1 \\
y, & a= x, \text{ } b = xyx \text{ and } v = w_2 \\
dyx, &a= x, \text{ } b = yxy \text{ and } v = w_3 \\
1, & a= x, \text{ }  b = (xy)^2 \text{ and } v = w_1 \\
dyx, & a= x, \text{ } b = (xy)^2 \text{ and } v = w_3 \\
0, & \text{otherwise.}
\end{cases}
\]
So for the eighth summand we obtain
 \[ d F_2^v(1 \x a \x xyx \x x \x x \x x \x xyx) = \begin{cases}
 dxyx, & a = x \text{ and } v = w_2\\
 0, & \text{otherwise.}
 \end{cases}
 \]
Now in case of fourth and eleventh summands 
 $$F_2^v\big(a \x b \x y \x y \x y \x (1+dy+d^2xyx)b^*\big) = et_3\Big( at_2 \big( vt_1(b \x y \x 1) \x r_y \x 1\big) \Big)(1+dy+d^2xyx)b^*,$$
 so
 \[F_2^v\big(a \x b \x y \x y \x y \x 1 \big) =\]
 \[= \begin{cases}
 et_3\big(at_2(xyx \x r_y \x 1 + d(xy)^2 \x r_y \x 1) \big), & b = xy \text{ and } v = w_1\\
 et_3\big(at_2(xy \x r_y \x 1 + (xy)^2 \x r_y \x 1) \big), & b = xy \text{ and } v = w_2\\
 et_3\big(at_2((xy)^2 \x r_y \x 1) \big), & (b, v) = (yxy, w_1) \text{ or } (b, v) = ((xy)^2, w_2) \\
 \end{cases}
 \]
and $F_2^v$ gives us zero for any other combinations of elements after right multiplying by $(1+dy+d^2xyx)b^*$. So we only need to check that
  \[F_2^v\big(a \x b \x y \x y \x y \x (1 +dy + d^2xyx)b^* \big) = \begin{cases}
dxy, & a=y, \text{ } b = xy \text{ and } v = w_1\\
xy, & a=y, \text{ } b = xy \text{ and } v = w_2\\
x, & a=y, \text{ } b = yxy \text{ and } v = w_1 \\
1, & a=y, \text{ } b = (xy)^2 \text{ and } v = w_2 \\
0, & \text{otherwise.}
 \end{cases}
 \]
So on the eleventh summand $F_2^v$ gives us zero and hence
 \[ F_2^{w_1} = \begin{cases} 1+yx + dyxy, & a =x \\x + dxy, & a = y, \end{cases} \quad F_2^{w_2} = \begin{cases} y+dxyx, & a = x \\ 1+xy, & a = y, \end{cases}\]
\[
 F_2^{w_3} = \begin{cases} dyx + d(xy)^2, & a = x \\ 0, & a = y. \end{cases}\]
 
 3) In order to calculate $F_3^v$ we need to note that if $t_1 \big(a_3 C(a_4) \big)=0$ for a summand of the form $a_1 \x \dots \x a_5 \x a_6$ then this summand gives us zero after applying $F_3^v$ to the summand. If $(a_3,a_4,a_5) = (x,x,x)$, then
 \[ t_1 \big(vt_1(x \x x \x 1) \x x \x 1 \big) = \begin{cases} 1\x r_x \x 1, & v = w_1 \\ 0, & v \not= w_1, \end{cases}\]
and so 
\[ F_3^{v} (1 \x a \x b \x x \x x \x x \x 1) = \begin{cases} 1, & a = x, \text{ } b = (xy)^2 \text{ and } v = w_1, \\
dyxy, & a = x, \text{ } b \in \{xy, xyx\} \text{ and } v = w_1, \\
0, & \text{otherwise.}
\end{cases}
\]
Now it is easy to see that $t_1(v t_1 (y \x y \x 1) \x y \x 1) = t_1(v (1 \x r_y \x 1) \x y \x 1)$, so $F_3^{w_1}=0$ on the fourth summand and
\[ F_3^{w_2} (a \x b \x y \x y \x y \x 1 ) = \begin{cases} 
dyxy, & a = y \text{ and } b = yx \\
1+dy, & a = y \text{ and } b = (xy)^2.
\end{cases}
\]
So $F_2^{w_2} =  1$ on the fourth summand in case of $a = y$ and it gives us zero otherwise. Observe that
\begin{itemize}
    \item $F_3^{w_3}\big(a \x x \x y \x y \x y \x 1 \big) = 
et_3\big(dx^2 \x r_y \x 1 + dxyx \x r_y \x x + dx \x r_x \x yx + d^2 x \x r_x \x yxy + d^2 xyx \x r_y \x xy + d^2 xy \x r_x \x (xy)^2)\big)$,
\item $F_3^{w_3}\big(a \x y \x y \x y \x y \x 1 \big) = 
et_3\big(dyx \x r_y \x 1 + dy \x r_x \x yx + d^2 y \x r_x \x yxy + d^2 y^2 \x r_x \x (xy)^2)\big)$,
\item $F_3^{w_3}\big(a \x xy \x y \x y \x y \x 1 \big) = 
et_3\big(dxyx \x r_y \x 1 + dxy \x r_x \x yx + d^2 xy \x r_x \x yxy)\big)$,
\item $F_3^{w_3}\big(a \x yx \x y \x y \x y \x 1 \big) = 
et_3\big(at_2(d(xy)^2 \x r_y \x x + dyx \x r_x \x yx + d^2 yx \x r_x \x yxy + d^2 (yx)^2 \x r_y \x xy + d^2 yxy \x r_x \x (xy)^2)\big)$,
\item  $F_3^{w_3}\big(a \x xyx \x y \x y \x y \x 1 \big) = 
et_3\big(at_2(dxyx \x r_x \x yx + d^2 xyx \x r_x \x yxy + d^2 (xy)^2 \x r_x \x (xy)^2)\big)$,
\item $F_3^{w_3}\big(a \x yxy \x y \x y \x y \x 1 \big) = 
et_3\big(at_2(d(xy)^2 \x r_y \x 1 + dyxy \x r_x \x yx + d^2 yxy \x r_x \x yxy)\big)$,
\item  $F_3^{w_3}\big(a \x (xy)^2 \x y \x y \x y \x 1 \big) = 
et_3\big(at_2(d(xy)^2 \x r_x \x yx + d^2 (xy)^2 \x r_x \x yxy)\big)$,
\end{itemize}
and it is now not hard to prove that $F_3^{w_3}$ gives us zero for any $b \in \{x, y, xy, xyx\}$ after the right multiplying by $(1+dy+d^2xyx)b^*$. So we have
$$F_3^{w_3}\big(a \x b \x y \x y \x y \x (1+dy+d^2 xyx)b^* \big) = \begin{cases}
dxyx + d^2(xy)^2, & a = y \text{ and } b = yx\\
dx, & a = y \text{ and } b = yxy \\
dyx, & a = x \text{ and } b = (xy)^2.
\end{cases}$$
Now consider the seventh summand of $\Phi_5 (1 \x a \x 1)$. Obviously
$$F_3^v(a \x b \x y \x xyx \x y \x 1) = $$
$$=et_3\bigg(at_2\Big( bt_1 \big(v(y \x r_y \x 1 + 1 \x r_y \x y + dy \x r_y \x y + d\x r_y \x xyx + d^2 y\x r_y \x xyx) \x y \x 1\big) \Big) \bigg),$$
and this is not equals zero only for $v = w_3$. In this case it is equal to $d F_3^v(a \x b \x y \x xyx \x y \x 1) = F_3^v(a \x b \x y \x y \x y \x 1) $, so
\[F_3^{w_3}\big(a \x b \x y \x y \x y \x (d+d^2y+d^3 xyx)b^* \big) = F_3^{w_3}\big(a \x b \x y \x y \x y \x (1+dy+d^2 xyx)b^* \big) \]
and $F_3^v = 0$ on eighth and eleventh summands by computations given above. Hence we have
\[ F_3^{w_1} = \begin{cases} 1, & a= x \\ 
0, & a = y,\end{cases} \quad
 F_3^{w_2} = \begin{cases} 0, & a= x \\ 
1, & a = y,\end{cases} \text{  and  }
 F_3^{w_3} = 0.
\]
4) Finally we need to compute $F_4^v$. Since $v t_1 (x \x x \x 1) = v( 1 \x r_x \x 1) \not=0$ only for $v = w_1$, we have $F_4^v = 0$ for the first summand and for any $v \in \{w_2, w_3\}$. In the case of $v = w_1$ we obtain
\[
\sum \limits_{b \in B^*}F_4^{w_1}(a \x b \x x \x x \x x \x b^*)  = et_3 \Big(a t_2 \big(b t_1 (x \x x \x 1) \big) \Big) =
\begin{cases}
1, & a =x \\
0, & a = y
\end{cases}
\]
as described in the cases above. Now it is obvious that $F_4^v$ gives us zero on the eighth summand and
\[\sum \limits_{b \in B^*} F_4^v \big(a \x b \x y \x y \x y \x (1+dy+d^2xyx)b^* \big) = 
\]
\[ =
\begin{cases}
0, & v = w_1 \\
\sum \limits_{b \in B^*} et_3\Big(at_2\big(bt_1(y \x y\x 1)\big)\Big)(1+ dy+d^2xyx)b^*, & v = w_2\\
\sum \limits_{b \in B^*} et_3\Big(at_2\big(bt_1(y \x x\x 1 + dy \x x \x y + dyx \x y \x 1)\big)\Big)(1+ dy+d^2xyx)b^*, & v = w_3\\
\end{cases}
\]
\[
=
\begin{cases}
1, & v = w_2 \text{ and } a = y\\
0, & \text{otherwise.}\\
\end{cases}
\]
So $F_4^v$ gives us zero on the eleventh summand and hence
\[ F_3^{w_1} = \begin{cases} 1, & a= x \\ 
0, & a = y,\end{cases} \quad
 F_3^{w_2} = \begin{cases} 0, & a= x \\ 
1, & a = y, \end{cases} \text{  and  }
 F_3^{w_3} = 0.
\]
According to the computations given above We have proved that
\[\sum \limits_{i=1}^4 F_i^v = \begin{cases}
d yxy, & a = x \text{ and } v = w_1\\
d(xy)^2, & a = x \text{ and } v = w_3 \\
0, & \text{otherwise.}
\end{cases}
\]
Finally, for any $v \in \{w_1, w_2,w_3\}$ we have
\[ 
[v,e] =\sum \limits_{i=1}^2 S_i^v + \sum \limits_{i=1}^4 F_i^v = 0.
\]
\end{proof}

\begin{corollary} Following formulae hold:
\begin{enumerate}
\item $\Delta(q_1e) = dp_1e$ and $\Delta(q_2e) = dp_2e$,
\item $\Delta(ve) = 0$ for any $v \in \{w_1, w_2, w_3\}$.
\end{enumerate}
\end{corollary}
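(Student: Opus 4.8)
The plan is to deduce both formulae purely algebraically, using no new resolution-level computation at all, by invoking the $BV$-identity together with the bracket and $\Delta$ values already established in the preceding lemmas. Since $\operatorname{char} K = 2$, the defining relation of the $BV$-algebra loses every sign and rearranges to
\[
\Delta(ab) = [a,b] + \Delta(a)b + a\,\Delta(b).
\]
Specializing $b = e$ and letting $a$ run through $q_1, q_2, w_1, w_2, w_3$ reduces the corollary to substituting known quantities: the brackets $[q_1,e]=0$, $[q_2,e]=dp_2e$ and $[w_j,e]=0$ from the two bracket lemmas; the values $\Delta(q_1)=dp_1$, $\Delta(q_2)=0$ from the degree-$1$ lemma and $\Delta(w_j)=0$ from the degree-$2$ lemma; and $\Delta(e)=d^3 p_1 q_1 w_1$ from the degree-$4$ lemma. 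After substitution, all that remains is to show that the ``cross terms'' $a\,\Delta(e)$ collapse in $\mathcal{A}=K[\mathcal{X}]/\mathcal{I}$.

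First I would dispatch the easy cases. For $v\in\{w_1,w_2,w_3\}$ one gets $\Delta(ve)=[v,e]+\Delta(v)e+v\,\Delta(e)=d^3 p_1 q_1 w_1 v$, and the surviving term vanishes because $w_1 v$ is a product of two degree-$2$ generators, hence lies among the degree-$4$ relations $w_iw_j=0$ of $\mathcal{I}$; this gives part~(2). Likewise, for $q_2$ one obtains $\Delta(q_2 e)=dp_2 e + 0 + d^3 p_1 q_1 q_2 w_1$, and the degree-$2$ relation $q_1 q_2 = 0$ annihilates the cross term, leaving exactly $dp_2e$.

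The only genuinely delicate point is the cross term in $\Delta(q_1 e)=dp_1 e + q_1\Delta(e)=dp_1 e + d^3 p_1 q_1^2 w_1$: one must prove $p_1 q_1^2 w_1=0$ in $\mathcal{A}$. I would establish this by a short chain of rewrites inside the quotient ring: the degree-$2$ relation $p_1 w_1 = p_4 q_1^2$ turns $p_1 q_1^2 w_1$ into $p_4 q_1^4$; the degree-$3$ relation $q_1^3=q_2^3$ then rewrites $q_1^4 = q_1 q_2^3$; and finally $q_1 q_2 = 0$ forces $q_1 q_2^3=(q_1q_2)q_2^2=0$. Hence $p_1 q_1^2 w_1=0$ and $\Delta(q_1 e)=dp_1 e$, completing part~(1).

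The main obstacle is therefore not the $BV$-identity, which does all the structural work, but the bookkeeping in $\mathcal{A}$: one must select exactly the right generators of $\mathcal{I}$ so that each extra term is forced to vanish, and in the $q_1$ case the order of the reductions matters, since a single relation does not suffice and the argument only closes after threading through a degree-$2$, a degree-$3$, and again a degree-$2$ relation in succession.
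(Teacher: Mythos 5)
Your proposal is correct and follows essentially the same route as the paper: apply the Tradler identity $\Delta(ab)=\Delta(a)b+a\Delta(b)+[a,b]$ in characteristic $2$, substitute the values of $\Delta$ and the brackets from the preceding lemmas, and kill the cross terms $a\Delta(e)$ using relations in $\mathcal{I}$. The only (harmless) difference is the chain of relations used for $p_1q_1^2w_1=0$: you go through $p_1w_1=p_4q_1^2$, $q_1^3=q_2^3$ and $q_1q_2=0$, while the paper uses the shorter rewrite via $q_1w_1=q_2w_2$ and $q_1q_2=0$.
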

\begin{proof}
Firstly observe that $\Delta(q_1)=dp_1$ and $\Delta(q_2) = \Delta(v)=0$ for any $v \in \{w_1,w_2,w_3\}$ and $\Delta(e)= d^3p_1q_1w_1$ according to Lemmas 2, 3 and 5. So by the Tradler equation 
$\Delta(ab) = \Delta(a)b + a \Delta(b) + [a,b].$
Hence we have
\begin{align*} &\Delta(q_1e) = dp_1e + d^3p_1q_1^2w_1 + 0 = dp_1e + d^3 q_1q_2w_2 = dp_1e,\\
&\Delta(q_2e) = 0 + d^3p_1q_1q_2w_1 + dp_2e = dp_2e,\\
&\Delta(ve) = 0 \cdot e + d^3 p_1q_1 v w_1 + 0 = 0 \end{align*}
for any $v \in \{w_1,w_2,w_3\}$.
\end{proof}

\section{Main Theorem.}
Let $K$ be an algebraically closed field of characteristic 2, let $d \in K$ be a scalar and let $R(2,0,d)$ be an algebra described in item $ 3.1$. So $BV$-structure on Hocschild cohomology algebra $HH^{*}(R)$ can be described in terms of map $\Delta: HH^*(R) \longrightarrow HH^{*}(R)$ of degree $-1$. 
\begin{theorem}
The map $\Delta$ is completely defined by the following equalities: 

\begin{itemize}

\item of degree 1: $\begin{cases}  \Delta(q_1) = dp_1,  \ \Delta(p_1q_1) = p_2+dp_1, \ \Delta(p_1q_2) = dp_2+p_3, \\ 
\Delta(p_2q_1) = p_3+dp_2, \ \Delta(p_3q_2) = p_2, \Delta(p_4q_1) = p_2 , \\
\Delta(p_3q_1) = \Delta(p_2q_2) = p_1, \  \Delta(p_4q_2) = p_3, \end{cases}$
\item of degree 3: $\begin{cases}
\Delta(q_1w_1) = \Delta(q_2w_2) = w_3, \ \Delta(q_2w_1) = q_2^2 + w_2, \\
\Delta(q_1w_2) = \Delta(q_2w_3) = q_1^2 + w_1 + d(p_1+1)w_2, \\
\Delta(q_1w_3) =  q_2^2 + w_2  + dw_3,
\end{cases}$
\item of degree 4: $ \Delta(e) = \Delta(p_4e) = d^3 p_1q_1w_1,$
\item of degree 5: $
\Delta(q_1e) = dp_1e, \ \Delta(q_2e) = dp_2e$,
\item $\Delta(ab) = 0$ for any other combinations of generating elements $a, b \in \mathcal{X} \cup \{1\} $.
\end{itemize}
\end{theorem}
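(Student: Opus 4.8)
The plan is to present the Main Theorem as an assembly of the computations carried out in Lemmas 2--7 and Corollary 8, organised by the observation that $\Delta$ is a $K$-linear operator of degree $-1$ and that, by the algebra presentation $HH^*(R)\simeq\mathcal{A}=K[\mathcal{X}]/\mathcal{I}$ (Theorem 2.1 of \cite{6}), $HH^*(R)$ is generated as a $K$-algebra by $\mathcal{X}$. Every homogeneous class is thus a $K$-linear combination of monomials in the generators, so by linearity it suffices to determine $\Delta$ on each monomial. The decisive tool is the characteristic-$2$ reduction identity
$$\Delta(abc)=\Delta(ab)c+\Delta(ac)b+\Delta(bc)a+\Delta(a)bc+\Delta(b)ac+\Delta(c)ab,$$
obtained above from the Poisson rule; applied repeatedly (grouping adjacent factors and treating an already-evaluated product as one homogeneous element) it expresses $\Delta$ of a monomial of length at least three through $\Delta$ on generators and on pairwise products. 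Equivalently, the Tradler equation $\Delta(ab)=\Delta(a)b+a\Delta(b)+[a,b]$ reduces a two-fold product to its factor values and a Gerstenhaber bracket. Consequently $\Delta$ is \emph{completely} determined once it is known on the generators and on all pairwise products, and this is precisely the finite list in the statement.

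First I would record $\Delta$ on the generators themselves: $\Delta(p_i)=0$ by degree, $\Delta(q_1)=dp_1$, $\Delta(q_2)=0$ (Lemma 2), $\Delta(w_i)=0$ (Lemma 3) and $\Delta(e)=d^3p_1q_1w_1$ (Lemma 5). Next I would collect $\Delta$ on pairwise products, sorted by total degree: degree $1$ from Lemma 2, degree $2$ entirely zero by Lemma 3, degree $3$ from Lemma 4, the degree-$4$ values $\Delta(e)$ and $\Delta(p_ie)$ from Lemma 5, and the degree-$5$ values $\Delta(q_ie)$, $\Delta(ve)$ from Corollary 8, which itself rests on the bracket computations of Lemmas 6 and 7 via the Tradler equation. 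For a square $a^2$ I would use $\Delta(a^2)=[a,a]$ together with the fact that in characteristic $2$ the bracket reads $[f,g]=f\circ g+g\circ f$, so that $[a,a]=2\,a\circ a=0$; in particular $\Delta(e^2)=0$. Every remaining pair is then checked, after reduction modulo $\mathcal{I}$, to yield zero, which establishes the final clause $\Delta(ab)=0$ for the other $a,b\in\mathcal{X}\cup\{1\}$.

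It then remains to confirm that these base values propagate to longer monomials and, in particular, up the infinite tower generated by the degree-$4$ class $e$. Here I would induct on monomial length using the reduction identity, at each step rewriting intermediate products inside $\mathcal{A}=K[\mathcal{X}]/\mathcal{I}$ and exploiting the characteristic-$2$ cancellations whereby repeated factors appear in pairs and vanish. For a product $x\,e^{k}$ the recursion expresses $\Delta(x\,e^{k})$ through $\Delta(x\,e^{k-1})$, $\Delta(e)$ and the brackets $[x,e]$, $[e,e]$, the last of which vanishes in characteristic $2$; this closes the induction and shows that the stated data genuinely define $\Delta$ on all of $HH^*(R)$.

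The main obstacle is organisational rather than conceptual: guaranteeing completeness of the case analysis. One must ensure that every pair $a,b\in\mathcal{X}\cup\{1\}$ has been accounted for, that each reduction modulo $\mathcal{I}$ is performed consistently---so that, for example, the by-product $p_1q_1^2w_1$ occurring in $\Delta(q_1e)$ is correctly rewritten as $q_1q_2w_2=0$---and that the characteristic-$2$ cancellations in the reduction identity are tracked without sign errors. As a safeguard I would apply $\Delta$ to both sides of the defining relations in $\mathcal{I}$ and verify $\Delta^2=0$ on the generators: since $\Delta$ is an intrinsically well-defined operator these identities must hold, and checking them confirms that the assembled formulae are mutually compatible.
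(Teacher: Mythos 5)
Your proposal is correct and takes essentially the same route as the paper: the Main Theorem there is proved by simply assembling Lemmas 2--5 and the Corollary, with completeness resting on the characteristic-2 reduction identity $\Delta(abc)=\Delta(ab)c+\Delta(ac)b+\Delta(bc)a+\Delta(a)bc+\Delta(b)ac+\Delta(c)ab$ and the Tradler equation, exactly as you argue. Your extra touches (handling squares via $[a,a]=0$ in characteristic 2, the explicit induction on monomial length up the $e$-tower, and the $\Delta^2=0$ consistency check) merely make explicit what the paper leaves implicit, rather than constituting a different method.
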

\begin{proof}
For the map defined above $\Delta$ we have equalities of degrees 1, 2, 3 and 4 by Lemmas 2, 3, 4 and 5 respectively and equalities of the higher degrees was given in Corollary 5.
\end{proof}

\end{document}